\def\A{\mathcal{A}}
\def\B{\mathcal B}
\def\F{\mathcal F}
\def\K{\mathcal K}
\def\P{\mathcal P}
\def\B{\mathcal B}
\def\C{\mathcal C}
\def\H{\mathcal H}
\def\K{\mathcal K}
\def\M{\mathcal{M}}
\def\S{\mathcal S}
\def\amslatex{$\mathcal{A}\kern-.1667em\lower.5ex\hbox{$\mathcal{M}$}\kern-.125em\mathcal{S}$-\LaTeX}
\newtheorem{set}{set}[section]
\newtheorem{Corollary}[set]{Corollary}
\newtheorem{Definition}[set]{Definition}
\newtheorem{Example}[set]{Example}
\newtheorem{Lemma}[set]{Lemma}
\newtheorem{Remark}[set]{Remark}
\newtheorem{Theorem}[set]{Theorem}
\newcommand{\define}{\mathrel{\hbox{$\equiv$\hskip -.90em \lower .47ex \hbox{$\leftharpoondown$}}}}
\newcommand{\enifed}{\mathrel{\hbox{$\equiv$\hskip -.90em \lower .47ex \hbox{$\rightharpoondown$}}}}
\begin{document}
\title[Multidimensional  Compound Poisson Processes in Free Probability] {Multidimensional Compound Poisson Distributions in Free Probability}
\thanks{This work is supported partially by the NSFC grants 11101220, 11271199,11671214,Fund 96172373 and the Fundamental Research Funds for the Central Universities.}
\author{Guimei An}
\address{School of Mathematical Sciences and LPMC \\
Nankai University, Tianjin 300071, China}
\email[Guimei
An]{angm@nankai.edu.cn}

\author{Mingchu Gao}
\address{School of Mathematics and Information Science,
Baoji University of Arts and Sciences, Baoji, Shaanxi 721013, China; and
Department of Mathematics,
Louisiana College, Pineville, LA 71359, USA}
\email[Mingchu
Gao]{mingchug@yahoo.com}

\maketitle
\begin{abstract}

 Inspired by R. Speicher's multidimensional free central limit theorem and semicircle families, we prove an infinite dimensional compound Poisson limit theorem in free probability, and define infinite dimensional compound free Poisson distributions in a non-commutative probability space. Infinite dimensional free infinitely divisible distributions are defined and characterized in terms of their free cumulants. It is proved that for   a sequence of random variables, the following three statements are equivalent. (1) The distribution of the sequence is multidimensional free infinitely divisible. (2) The sequence is  the limit in distribution of a sequence of triangular trays of families of random variables. (3) The sequence  has a distribution  same as that of $\{a_1^{(i)}: i=1, 2, \cdots\}$ of a multidimensional free Levy process $\{\{a_t^{(i)}:i=1, 2, \cdots\}: t\ge 0\}$. Under certain technical appumption, this is the case if and only if the sequence   is the limit in distribution of a sequence of sequences of random variables having multidimensional compound free Poisson distributions.
\end{abstract}
{\bf Key Words} Free Probability,  Multidimensional Free Poisson Distributions, Multidimensional free infinitely divisible distributions.

{\bf 2010 MSC} 46L54
\section*{Introduction}
The most popular distributions in classical probability are Gaussian distributions. Poisson distributions  form  a class of the
most prominent distributions in classical probability beyond Gaussian
distributions (Lecture 12 in \cite{NS}). A very important method in classical probability theory for generating distributions centers around a generalization of Poisson distributions. Given a random variable $a$, one can construct out of this a compound Poisson distribution $\pi_a$ by the prescription that the moments of $a$ give, up to a common factor, the cumulants of $\pi_a$. One possibility to make the transition from $a$ to $\pi_a$ is by a limit theorem. The importance of these compound Poisson distributions rises from the fact that all infinitely divisible distributions can be approximated by compounded Poisson distributions (see Chapter XVII in \cite{WF} and Section 4.4 in \cite{RS}). These ideas have been imitated in free probability.
Semicircle distributions are the counterpart of normal distributions in free probability theory. Such a distribution can be realized as the limit distribution of a sequence of certain random variables. This result was called the {\sl free central limit theorem} proved by D. Voiculescu \cite{DV} (see also Theorem 8.10 in \cite{NS}).
Very similarly, a (compound) free Poisson distribution can be realized as the limit in distribution of a sequence of simple distributions (12.11,  12.12,  12.15, and 12.16 in \cite{NS}, also \cite{RS}, \cite{RS1} and \cite{DV1}). More generally, infinitely divisible distributions,  Levy processes,  and limit theorems have been studied thoroughly in free probability  (see, for instance, \cite{BnT} and \cite{BP}). Very recently, the infinite divisibility of the distribution of a pair of random variables was studied in \cite{GHM} and \cite{MG} in the setting of bi-free probability, a new research area in free probability theory introduced by Voiculescu in \cite{DV2}.

In contrast to the thorough study of free Poisson distributions and free infinitely divisible distributions, we cannot find too much work on multidimensional Poisson distributions and multidimensional infinitely divisible distributions in free probability. Inspired by the corresponding ideas in classical probability, R. Speicher \cite{RS} gave a brief theory on operator-valued compound free Poisson distributions and multidimensional free infinitely divisible distributions. Given an $m$-tuple $(a_1, a_2, \cdots, a_m)$ of random variables in a non-commutative probability space $(\A,\varphi)$, and a constant number $\lambda\in \mathbb{R}$, Speicher defined a multidimensional compound free Poisson distribution as follows. An $m$-tuple $\{b_1, b_2, \cdots, b_m\}$ of random variables in a non-commutative probability space $(\B,\phi)$ has a multidimensional compound free Poisson distribution if $\kappa_n(b_{i_1}, b_{i_2}, \cdots, b_{i_n})=\lambda\varphi(a_{i_1}a_{i_2}\cdots a_{i_n})$, for $i_1, \cdots, i_n\in \{1, 2, \cdots, m\}, n\in \mathbb{N}$, where $\kappa_n$ is the $n$-th free cumulant on $(\B, \phi)$ (4.4.1 in \cite{RS}). A multidimensional compound free Poisson limit theorem,  the definition of multidimensional  infinitely divisible distributions, and the semigroup of distributions  and the approximation of compound free Poisson distributions for a multidimensional free infinitely divisible distributions were given in Sections 4.4 and 4.5 in \cite{RS}. All distributions  in \cite{RS} are finite dimensional, that is, they are distributions of $m$-tuples of random variables.
 Benaych-Georges \cite{BG} characterized  an $m$-dimensional free infinitely divisible distribution in terms of its free cumulants under the hypothesis that all random variables live in a tracial non-commutative probability space $(\A,\varphi)$ (i. e., $\varphi(ab)=\varphi(ba)$, for all $a, b\in \A$).

 R. Speicher \cite{RS1}  gave a multidimensional central limit theorem (see also Theorem 8.17 in \cite{NS}). Roughly speaking, the theorem states that the (joint) distribution of a semicircle family can be realized as the limit in distribution of a sequence of families of random variables.
 Based on the same philosophy,  in this paper, we prove an infinite dimensional compound free Poisson limit theorem, and  develop a theory on infinite dimensional infinitely divisible distributions in free probability.

 This paper is organized as follows. Beside this introduction, there are three sections in the paper. In Section 1, we present some concepts and results in free probability used in sequel. Section 2 is devoted to the study of infinite dimensional compound free Poisson distributions.  A free Poisson random variable can be realized as the limit in distribution of a sequence of free triangular arrays of scalar multiples of projections (see the arguments at the beginning of Section 2). Substituting the projections in the free Poisson limit theorem by sequences of projections (with possible distinct expectations), we get an infinite dimensional free Poisson limit theorem.  By using techniques in  ultra-products of $C^*$-algebras, we can choose the limit sequence of random variables from a $C^*$-probability space (Theorem 2.5).
 Using other techniques in tensor products of $C^*$-algebras, we then present an infinite dimensional compound free Poisson limit theorem (Theorem 2.8). We say that the limit sequence of random variables has a {\sl multidimensional compound free Poisson distribution} (Definition 2.10). In Example 2.11, we present examples of sequences of random variables having multidimensional compound free Poisson distributions such as Speicher's multidimensional compound free Poisson distributions given in Section 4.4 of \cite{RS}, tuples of random variables constructed from general self-adjoint random variables and a free family of semicircle random variables (a special type of such constructions can be found in Proposition 12.18 in \cite{NS}), and free families of compound free Poisson random variables with certain distributions.
  In Section 3, we study infinite dimensional infinitely divisible distributions in free probability.  We generalize Speicher's work in Section 4.5 of \cite{RS} and Benaych-Georges' work in \cite{BG} in two aspects. We study distributions of sequences of random variables (we thus call such a distribution an infinite dimensional distribution).  Random variables under study are in a $C^*$-probability space (and its linear functional is not necessarily  tracial). We first give the definitions of infinite dimensional infinitely divisible distributions and infinite dimensional free Levy processes $\{\{a_t^{(i)}:i=1, 2, \cdots\}:t\ge 0\}$ (Definitions 3.1 and  3.2).  We characterize the free infinite divisibility of the distribution of a sequence of random variables in terms of its free cumulants (Theorem 3.3 (2) and (4)).
  Finally, under certain technical assumption, using the techniques in inductive limits of $C^*$-algebras, we  generalize Speicher's free Poisson approximation theorem 4.5.5 in \cite{RS} to the infinite dimensional distribution case (Theorem 3.6).

\section{Preliminaries}

In this section we recall some basic concepts and results in free probability used in sequel. The reader is referred to \cite{NS} and \cite{VDN} for more details on free probability,  and to \cite{KR} for operator algebras.

{\bf Non-commutative Probability spaces}. A non-commutative probability space is a pair $(\A,\varphi)$ consisting of a unital algebra $\A$ and a unital linear functional $\varphi$ on $\A$. When
$\A$ is a $*$-unital algebra, $\varphi$ should be positive, i. e.,
$\varphi(a^*a)\ge 0,\forall a\in \A$. A $C^*$-probability space
$(\A,\varphi)$ consists of a unital $C^*$-algebra and a state
$\varphi $ on $\A$. A $W^*$-probability space $(\A,\varphi)$
consists of a finite von Neumann algebra $\A$ and a faithful normal
tracial state $\varphi$ on $\A$. An element $a\in \A$ is called a
{\sl (non-commutative) random variable}. $\varphi (a^n)$ is called the
{\sl $n$-th moment} of $a$, for $n=1, 2, \cdots$. Let $\mathbb{C}[X]$ be the
complex algebra of all polynomials of an indeterminate $X$. The
linear function $\mu_a:\mathbb{C}[X]\rightarrow \mathbb{C}$,
$\mu_a(P(X))=\varphi (P(a)), \forall P\in \mathbb{C}[X]$, is called
the {\sl distribution (or law)} of $a$.

{\bf Joint Distributions.}  Let  $(\A, \varphi)$ be a non-commutative probability space and  $I$ be an index set. For a family $\{a_i \in \A: i\in I\}$, the family $\{\varphi (a_{i_1}a_{i_2}\cdots a_{i_n}): 1\le i_1\le i_2\le \cdots \le i_n\in I, n\ge 1\}$ is called the  family of {\sl joint moments} of $\{a_i:i\in I\} $.
Let $\mathbb{C}\langle X_i:i\in I\rangle$ be the unital algebra freely generated by  non-commutative indeterminates  $X_i, i\in I$. The linear functional $\mu:\mathbb{C}\langle X_i:i\in I \rangle \rightarrow \mathbb{C}$ defined by $$\mu(P)=\varphi (P(a_{i_1}, a_{i_2}, \cdots, a_{i_n})), \forall P=P(X_{i_1}, X_{i_2}, \cdots, X_{i_n})\in \mathbb{C}\langle X_i: i\in I\rangle,$$ is called the {\sl joint distribution} of the sequence $\{a_1:i\in I\}$. A sequence $\{\{a_{i,n}:i\in I\}:n=1, 2, \cdots \}$ of families of random variables in a non-commutative probability space $(\A, \varphi)$ {\sl converges in distribution} to a family $\{b_i:i\in I\}$ of random variables in a non-commutative probability space $(\B,\phi)$ if, for all $i_1, i_2, \cdots, i_m\in I$, $m\in \mathbb{N}$, $$\lim_{n\rightarrow \infty}\varphi(a_{i_1,n}a_{i_2,n}\cdots a_{i_m,n})=\phi(b_{i_1}b_{i_2}\cdots b_{i_m}). $$

{\bf Free independence.}   A  family $\{\A_i: i\in I\}$ of unital subalgebras of a non-commutative probability space $(\A,\varphi)$ is {\sl freely independent (or free)} if $\varphi (a_1a_2\cdots a_n)=0$ whenever the following conditions are met: $a_i\in \A_{l(i)}$, $\varphi (a_i)=0$ for $i=1, 2, \cdots, n$, and $l(i)\ne l(i+1)$, for $i=1, 2, \cdots, n-1$. A family $\{a_i:i\in I\}$ of elements is free if the unital subalgebras generated by $a_i$'s are free.

 {\bf Non-crossing partitions.} Given a natural number $m\ge 1$, let $[m]=\{1, 2, \cdots, m\}$. A {\sl partition} $\pi$ of $[m]$ is a collection of non-empty disjoint subsets of $[m]$ such that the union of all subsets in $\pi$ is $[m]$. A partition $\pi=\{B_1, B_2,\cdots, B_r\}$ of $[m]$ is {\sl non-crossing} if one cannot find two block $B_i$ and $B_j$ of $\pi$, and four numbers $p_1, p_2\in B_i$, $q_1, q_2\in B_j$ such that $p_1<q_1<p_2<q_2$. The collection of all non-crossing partitions of $[m]$ is denoted by $NC(m)$.
 $|NC(m)|$, the number of non-crossing partitions of $[m]$,  is $C_m=\frac{(2m)!}{m!(m+1)!}$, which is called the $m$-th Catalan number (Notation 2.9 in \cite{NS}).

 {\bf The Mobius function.} Let $P$ be a finite partial ordered set (poset),  and $P^{(2)}=\{(\pi, \sigma): \pi, \sigma\in P, \pi \le \sigma\}$. For two functions $F, G:P^{(2)}\rightarrow \mathbb{C}$, we define the convolution $F*G$ by $$F*G(\pi, \sigma):=\sum_{\rho\in P, \pi\le \rho\le \sigma}F(\pi, \rho)G(\rho, \sigma).$$ Let $\delta(\pi,\sigma)=1$, if $\pi=\sigma$; $\delta(\pi, \sigma)=0,$ if $\pi <\sigma$. Then
 $$F*\delta(\pi,\sigma)=\sum_{\rho\in P, \pi\le \rho\le \sigma}F(\pi, \rho)\delta(\rho, \sigma)=F(\pi, \sigma), \forall F.$$ It follows that $\delta$ is the unit of set of all functions on $P^{(2)}$ with respect to convolution $*$.  The inverse function of the function $\zeta: P^{(2)}\rightarrow \mathbb{C}$, $\zeta(\pi, \sigma)=1, \forall (\pi, \sigma)\in P^{(2)}$, with respect to the convolution $*$ is called the Mobius function $\mu_P$ of $P$.

 {\bf Free cumulants.} Let $\pi, \sigma\in NC(n)$. We say $\pi\le \sigma$ if each block (a subset of $[n]$) of $\pi$ is completely contained in one of the blocks of $\sigma$. $NC(n)$ is a poset by this partial order. The Mobius function of $NC(n)$ is denoted by $\mu_n$. The unital linear functional $\varphi:\A\rightarrow \mathbb{C}$ produces a sequence of multilinear functionals $$\varphi_n:\A^n\rightarrow \mathbb{C}, \varphi_n(a_1, a_2, \cdots, a_n)=\varphi (a_1a_2\cdots a_n), n=1, 2, \cdots.$$ Let $V=\{i_1, i_2, \cdots, i_s\}\subseteq [n]$ such that $i_1<i_2<\cdots <i_s$. We define $\varphi_V(a_1, a_2, \cdots, a_n)=\varphi (a_{i_1}a_{i_2}\cdots a_{i_s})$. More generally, for a partition $\pi=\{V_1, V_2, \cdots, V_r\}\in NC(n)$, we define $$\varphi_\pi(a_1, a_2, \cdots, a_n)=\prod_{i=1}^r\varphi_{V_i}(a_1, a_2, \cdots, a_n).$$ The $n$-th {\sl free cumulant} of $(\A,\varphi)$ is the multilinear functional $\kappa_n:\A^n\rightarrow \mathbb{C}$ defined by   $$\kappa_n(a_1, a_2, \cdots, a_n)=\sum_{\pi\in NC(n)}\varphi_\pi (a_1, a_2, \cdots, a_n)\mu_n(\pi, 1_n), $$ where $1_n=[n]$ is the single-block partition of $[n]$. The convergence in distribution of a sequence  of families of random variables  can be characterized by their free cumulants. A sequence  $\{\{a_{i,n}:i\in I\}:n=1, 2, \cdots \}$ converge in distribution to $\{b_i:i\in I\}$ if and only if for all $i_1, i_2, \cdots, i_m\in I$, $m\in \mathbb{N}$, $$\lim_{n\rightarrow \infty}\kappa_m(a_{i_1,n},a_{i_2,n}, \cdots, a_{i_m,n})=\kappa_m(b_{i_1},b_{i_2},\cdots, b_{i_m}). $$

 Free cumulants $\kappa_n:\A^n\rightarrow \mathbb{C}$ and free independence have a very beautiful relation.
 \begin{Theorem}[Theorem 11.20 in \cite{NS}] A family $\{a_i:i\in I\}$ of elements in $(\A, \varphi)$ is freely independent if and only if for all $n\ge 2$ and all $i(1), i(2), \cdots, i(n)\in I$, $$\kappa_n(a_{i(1)}, a_{i(2)}, \cdots,  a_{i(n)})=0$$ whenever there exist $1\le l, k\le n$ with $i(l)\ne i(k)$.
 \end{Theorem}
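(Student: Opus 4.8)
The statement is the standard characterization of free independence through the vanishing of mixed free cumulants, and the plan is to prove the two implications separately, the essential tool being the moment--cumulant formula obtained by M\"obius inversion of the defining relation for $\kappa_n$. Writing $\kappa_\pi(a_1,\dots,a_n)=\prod_{V\in\pi}\kappa_{|V|}\big((a_j)_{j\in V}\big)$ for the product of cumulants along the blocks of $\pi\in NC(n)$, M\"obius inversion over the poset $NC(n)$ turns the definition of $\kappa_n$ into the formula
$$\varphi(a_1\cdots a_n)=\sum_{\pi\in NC(n)}\kappa_\pi(a_1,\dots,a_n),$$
which I will use throughout. I will also use the elementary facts that each $\kappa_n$ is multilinear and that a cumulant of order $n\ge 2$ having the unit $1$ as one of its entries vanishes; the latter lets me replace every $a_j$ by its centering $a_j-\varphi(a_j)1$ without altering $\kappa_n$ for $n\ge 2$, so in both directions I may assume $\varphi(a_j)=0$.

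For the implication that vanishing of mixed cumulants forces freeness, I would take $a_j\in\A_{l(j)}$ with $\varphi(a_j)=0$ and $l(j)\ne l(j+1)$ for all $j$, and compute $\varphi(a_1\cdots a_n)$ from the moment--cumulant formula. By hypothesis any $\kappa_\pi$ with a block straddling two different subalgebras vanishes, so only partitions all of whose blocks are monochromatic survive. Here I invoke the combinatorial fact that every $\pi\in NC(n)$ possesses at least one block that is an interval $\{i,i+1,\dots,j\}$; monochromaticity of that block together with the alternation condition $l(i)\ne l(i+1)$ forces it to be a singleton $\{i\}$, whose cumulant $\kappa_1(a_i)=\varphi(a_i)=0$ kills the whole product. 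Hence $\varphi(a_1\cdots a_n)=0$, which is exactly the defining condition for $\{\A_i\}$ to be free.

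For the converse, that freeness implies the vanishing of every mixed cumulant, I would argue by induction on $n$, the case $n=2$ being immediate from $\kappa_2(a_1,a_2)=\varphi(a_1a_2)-\varphi(a_1)\varphi(a_2)$ and the $n=2$ freeness relation. Assuming all mixed cumulants of order less than $n$ vanish and the $a_j$ centered with mixed colours, I would insert the moment--cumulant formula and discard, via the induction hypothesis, every term $\kappa_\pi$ with $\pi\ne 1_n$ having a non-monochromatic block (each such block has size strictly less than $n$, so its cumulant vanishes), leaving
$$\varphi(a_1\cdots a_n)=\kappa_n(a_1,\dots,a_n)+\sum_{\substack{\pi\ne 1_n\\ \pi\ \mathrm{monochromatic}}}\kappa_\pi(a_1,\dots,a_n).$$
The remaining task is to show that $\varphi(a_1\cdots a_n)$ coincides with the displayed monochromatic sum, which then forces $\kappa_n=0$. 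When the word is alternating this is immediate, since freeness gives $\varphi(a_1\cdots a_n)=0$ while the interval-block argument above makes each monochromatic term vanish as well. The genuine difficulty, and the step I expect to be the main obstacle, is the non-alternating case, where adjacent factors from the same subalgebra must be absorbed into single elements so as to reduce to an alternating centered word; controlling this reduction---equivalently, applying the formula for free cumulants having products as entries---while keeping exact track of how the non-crossing partition structure transforms is the technical heart of the argument.
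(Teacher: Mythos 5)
The paper does not prove this statement at all; it is quoted verbatim as Theorem 11.20 of \cite{NS} in the preliminaries, so there is no in-paper argument to compare yours against. Judged on its own terms, your outline follows the standard Nica--Speicher strategy (moment--cumulant formula, centering via the vanishing of cumulants with a $1$ entry, the interval-block lemma for non-crossing partitions), and the direction ``vanishing of mixed cumulants implies freeness'' is essentially complete for alternating centered words. But there is a genuine gap, and you have named it yourself without closing it: in the direction ``freeness implies vanishing of mixed cumulants,'' the non-alternating case is not an optional refinement but the substance of the proof. A mixed tuple $(a_{i(1)},\dots,a_{i(n)})$ with at least two distinct colours is in general not alternating, so the inductive scheme you set up never reaches the generic mixed cumulant unless you actually carry out the reduction via the formula for free cumulants with products as entries (Theorem 11.12 in \cite{NS}): merge adjacent same-colour entries, apply the induction hypothesis to the resulting shorter mixed cumulant, and solve the resulting identity $0=\kappa_{n-1}(\dots,a_ia_{i+1},\dots)=\kappa_n(a_1,\dots,a_n)+(\text{lower-order mixed terms})$ for $\kappa_n$. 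Deferring this as ``the technical heart'' leaves the implication unproved.

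A second, subtler issue: the statement is about a family of \emph{elements}, where freeness means freeness of the generated unital subalgebras. In the forward direction you silently upgrade the hypothesis ``mixed cumulants of the $a_i$ vanish'' to ``mixed cumulants of arbitrary elements of the generated subalgebras vanish,'' which is what the alternating-word computation actually requires. That upgrade is again exactly an application of the products-as-entries formula (one must check that a cumulant of polynomials in the $a_i$ expands into cumulants of the generators, each of which is mixed whenever the original one was). So both implications lean on the same missing lemma; stating and using it is what would turn your outline into a proof.
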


 {\bf Semicircle families.} Let $(\A,\varphi)$ be a $*$-probability space. A self-adjoint element $a\in \A$ is a {\sl semicircle element} (or has a {\sl semicircle distribution}) if $$\varphi (a^n)=\frac{2}{\pi r^2}\int_{-r}^rt^n\sqrt{r^2-t^2}dt, n=1, 2, \cdots,$$ where $r$ is called the radius of the distribution of  $a$. When $r=2$, $\varphi (a^2)=1$, we call $a$ a {\sl standard semicircle} element (or has a standard semicircle distribution). A semicircle element can be characterized by its moments $\varphi(a^{2k})=(r^2/4)^kC_k$, where $C_k$ is the $k$-th Catalan number, and $\varphi (a^{2k+1})=0$, $k=0, 1, 2, \cdots$, or by its free cumulants $\kappa_n(a):=\kappa_n(a,\cdots, a)=\delta_{n,2}\frac{r^2}{4}$ ((11.13) in \cite{NS}). Given an index set $I$ and a positive definite matrix $(c_{ij})_{i,j\in I}$, a family $(s_i)_{i\in I}$ of self-adjoint random variables in a $*$-probability space $(\A, \varphi)$ is called a {\sl semicircular family of covariance $(c_{i,j})_{i,j\in I}$} if $\kappa_n(s_{i(1)}, \cdots, s_{i(n)})=\delta_{n,2}c_{i(1), i(2)}$, for $i(1), \cdots, i(n)\in I$. (See 8.15 in \cite{NS}.)

\section{Multidimensional free Poisson distributions}

By the discussion in Page 203 and Exercise 12.22 of \cite{NS}, a classical Poisson distribution is the limit in distribution of a sequence of convolutions of Bernoulli distributions. In the point of view of random variables, we can restate it as follows. Let $\lambda >0, \alpha\in \mathbb{R}$. For each $N\in \mathbb{N}, N>\lambda$, let $\{b_{i,N}:i=1,2, \cdots, N\}$ be a sequence of i.i.d. Bernoulli random variables such that $$Pr(b_{i,N}=0)=1-\frac{\lambda}{N}, Pr(b_{i,N}=\alpha)=\frac{\lambda}{N}.$$ Then the binomial random variable $S_N=\sum_{i=1}^Nb_{i,N}$ has a binomial distribution $$Pr(S_N=k\alpha)=\binom{N}{k}(\frac{\lambda}{N})^k(1-\frac{\lambda}{N})^{N-k},$$ $k=0, 1, 2, \cdots, N$,  where $\binom{N}{k}$ is the combination number (or the binomial coefficient). Let $N\rightarrow \infty$, by elementary calculus, we can get
$$\lim_{N\rightarrow \infty}Pr(S_N=k\alpha)=\frac{\lambda^k}{k!}e^{-\lambda}=Pr(P=k\alpha),$$ where $P$ has a Poisson distribution $Pr(P=k\alpha)=\frac{\lambda^k}{k!}e^{-\lambda}$, $k=0, 1, 2, \cdots$.

In the non-commutative case, the free Poisson limit theorem (Proposition 12.11 in \cite{NS}) states that a free Poisson distribution is the limit in distribution of a sequence of free convolutions of Bernoulli distributions. Let's restate it in the language of random variables.

  Let $(\A,\varphi)$ be a non-commutative probability space.  A {\sl Bernoulli random variable} $a\in \A$ is a linear combination $a=\alpha p+\beta (1-p)$, where $\alpha,\beta \in \mathbb{R}$, and $p\in \A$ is an idempotent (i. e., $p^2=p$) with $0\le \varphi(p)\le 1$. The classical interpretation of a Bernoulli random variable is that $a$ is a random variable with two ``values": $\alpha$ and $\beta$, and $Pr(a=\alpha)=\varphi(p), Pr(a=\beta)=1-\varphi(p)$. In the free Poisson limit theorem,  $\beta=0$, $\varphi(p)=\frac{\lambda}{N}, N>\lambda$. We can restate the free Poisson limit theorem as follows. Let $\lambda>0, \alpha\in \mathbb{R}$. For $N\in \mathbb{N}, N>\lambda$, let $\{\alpha p_{1,N}, \alpha p_{2,N}, \cdots, \alpha p_{N,N}\}$ be a free family of Bernoulli random variables such that $\varphi(p_{i,N})=\frac{\lambda}{N}, i=1, 2, \cdots, N$. Let $S_N=\sum_{i=1}^N\alpha p_{i,N}$. Then
 $$\lim_{N\rightarrow \infty}\kappa_m(S_N)=\lambda \alpha^m, m=1,2, \cdots.$$
Hence, we can restate the definition of free Poisson distributions as follows.
\begin{Definition}[Proposition 12.11, Definition 12.12 \cite{NS}] Let $\lambda\ge 0, \alpha \in \mathbb{R}$, and $(\A,\varphi)$ be a non-commutative probability space. A random variable $a\in \A$  has a free Poisson distribution if the free cumulants of $a$ are $\kappa_n(a)=\lambda \alpha^n, \forall n\in \mathbb{N}$.
\end{Definition}

In this section, we shall generalize the results on free Poisson distributions in Lecture 12 of \cite{NS} to the multidimensional case.

By the proof of Theorem 13.1  in \cite{NS}, we can modify the theorem slightly as follows.
\begin{Theorem}[Theorem 13.1 and Lemma 13.2 in \cite{NS}]
Let $\{n_k\}$ be a sequence of natural numbers such that
$\lim_{k\rightarrow \infty}n_k =\infty$, and, for each natural
number $k$, $(\A_k, \varphi_k)$ be a non-commutative probability
space. Let $I$ be an index set. Consider a triangular array of
random variables, i. e., for each $i\in I$, $0\le r\le n_k$, we have
a random variable $a^{(i)}_{r, n_k}\in \A_k$. Assume that, for each
$k$, the sets $\{a^{(i)}_{1, n_k}\}_{i\in
I},\{a^{(i)}_{2, n_k}\}_{i\in I}, \cdots,
\{a^{(i)}_{n_k,n_k}\}_{i\in I} $ are free and identically
distributed. Then the following statements are equivalent.
\begin{enumerate}
\item There is a family of random variables $(b_i)_{i\in I}$ in some non-commutative probability space $(\A,\varphi)$ such that
$(a^{(i)}_{1, n_k}+a^{(i)}_{2, n_k}+\cdots+a^{(i)}_{n_k,n_k})_{i\in I}$ converges in distribution to $(b_i)_{i\in I}$, as $k\rightarrow \infty$.
\item For all $n\ge 1$, and all $i(1), i(2), \cdots, i(n)\in I$, the limits
$\lim_{k\rightarrow \infty}n_k\varphi_k(a^{(i(1))}_{r, n_k}\cdots a^{(i(n))}_{r, n_k})$ exist, $1\le r\le n_k$.
\item For all $n\ge 1$, and all $i(1), i(2), \cdots, i(n)\in I$, the limits
$\lim_{k\rightarrow \infty}n_k \kappa_n^k(a^{(i(1))}_{r, n_k},\cdots, a^{(i(n))}_{r, n_k})$ exist, $1\le r\le n_k,$ where $\kappa_n^k$ is the $n$-th free cumulant of $\A_k$.
\end{enumerate}
Furthermore, if one  of these conditions is satisfied, then the
limits in $(2)$ are equal to the corresponding limits in $(3)$, and
the joint distribution of the limit family $(b_i)_{i\in I}$ is
determined  by, for $n\ge 1, i(1), i(2),
\cdots, i(n)\in I$,
$$\kappa_n (b_{i(1)}b_{i(2)}\cdots b_{i(n)})=\lim_{k\rightarrow \infty}n_k \varphi_k (a^{(i(1))}_{r, n_k}a^{(i(2))}_{r, n_k}\cdots a^{(i(n))}_{r, n_k}). $$
\end{Theorem}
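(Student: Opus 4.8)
The plan is to reduce all three statements to the behaviour of the free cumulants of the row sums. Write $S^{(i)}_{k}=a^{(i)}_{1,n_k}+a^{(i)}_{2,n_k}+\cdots+a^{(i)}_{n_k,n_k}$ for the $i$-th row sum, and let $\kappa_n$ denote the free cumulant of the ambient space in which the $S^{(i)}_k$ live. The computation on which everything rests is the identity
$$\kappa_n\bigl(S^{(i(1))}_{k},\ldots,S^{(i(n))}_{k}\bigr)=n_k\,\kappa_n^k\bigl(a^{(i(1))}_{1,n_k},\ldots,a^{(i(n))}_{1,n_k}\bigr).$$
To prove it I would expand the left side by multilinearity of cumulants into a sum over all assignments of each of the $n$ slots to one of the $n_k$ free blocks; by the vanishing of mixed cumulants for free families (see \cite{NS}), every assignment using two distinct blocks gives a zero term, so only the $n_k$ diagonal assignments survive. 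Since the blocks are identically distributed these $n_k$ terms are equal, producing the factor $n_k$.

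Granting this identity, the equivalence $(1)\Leftrightarrow(3)$ is essentially immediate. By the cumulant criterion for convergence in distribution recalled in Section 1, statement $(1)$ holds exactly when $\lim_{k}\kappa_n(S^{(i(1))}_{k},\ldots,S^{(i(n))}_{k})$ exists for every $n$ and every tuple $i(1),\ldots,i(n)$, and the identity rewrites these as the limits $\lim_{k}n_k\kappa_n^k(\cdots)$ of statement $(3)$. For the implication $(3)\Rightarrow(1)$ one must exhibit a limit family: I would take $\A=\mathbb{C}\langle X_i:i\in I\rangle$, define $\varphi$ on monomials by the moment-cumulant formula applied to the prescribed limit values, and set $b_i=X_i$; by construction the cumulants of $(b_i)_{i\in I}$ are the limits in $(3)$. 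This construction, combined with $(2)\Leftrightarrow(3)$ below, also yields the closing displayed formula for the joint distribution of $(b_i)_{i\in I}$.

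The equivalence $(2)\Leftrightarrow(3)$, together with the asserted equality of the two families of limits, is the analytic core, and I would prove it by induction on $n$ from the moment-cumulant expansion
$$n_k\,\varphi_k\bigl(a^{(i(1))}_{1,n_k}\cdots a^{(i(n))}_{1,n_k}\bigr)=n_k\,\kappa_n^k(\cdots)+\sum_{\substack{\pi\in NC(n)\\ \pi\neq 1_n}}n_k\prod_{V\in\pi}\kappa_{|V|}^k(\cdots),$$
where each factor $\kappa_{|V|}^k(\cdots)$ is the cumulant of the sub-tuple indexed by the block $V$. The governing estimate is that a convergent sequence $n_k\kappa_m^k$ forces $\kappa_m^k=O(1/n_k)$; hence a partition with $s\ge2$ blocks contributes a term of order $n_k\cdot O(1/n_k^{\,s})=O(1/n_k^{\,s-1})\to0$. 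Thus, assuming $(3)$ up to level $n$, every $\pi\neq1_n$ term disappears in the limit, giving $(2)$ with the same limit; conversely, assuming $(2)$ at level $n$ and, by the inductive hypothesis, $(3)$ at all levels below $n$, the same vanishing lets me solve the relation for $n_k\kappa_n^k(\cdots)$ and conclude it converges to $\lim_k n_k\varphi_k(\cdots)$. The base case $n=1$ is trivial because $\kappa_1=\varphi_1$.

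The main obstacle is setting up this induction so that the error estimate genuinely applies to every factor in each multi-block product, including singleton blocks $V$: there the bound $\kappa_1^k=O(1/n_k)$ is precisely the level-one hypothesis, and one must check that each $\pi\neq1_n$ therefore carries at least two $O(1/n_k)$ factors before multiplication by $n_k$. By contrast, passing from the finite-dimensional theorem of \cite{NS} to an arbitrary index set $I$ is not a real difficulty, since every assertion is tested on finite tuples $i(1),\ldots,i(n)$ and the additivity identity, the vanishing estimate, and the realization of $(b_i)_{i\in I}$ all localize to such tuples.
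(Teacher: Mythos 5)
Your proposal is correct and follows essentially the same route as the source the paper relies on (the paper gives no independent proof, deferring to Theorem 13.1 and Lemma 13.2 of \cite{NS}): the additivity identity $\kappa_n(S_k,\ldots,S_k)=n_k\kappa_n^k(\cdots)$ via vanishing of mixed cumulants is exactly Lemma 13.2, and the induction on $n$ through the moment--cumulant expansion with the $O(1/n_k)$ estimate killing all $\pi\neq 1_n$ terms is exactly the argument of Theorem 13.1. Your observation that extending to an arbitrary index set $I$ costs nothing because all statements are tested on finite tuples is precisely the "slight modification" the paper invokes.
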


We will use the following elementary results in sequel.
\begin{Lemma}Let $\{a_{i,j}: i, j=1, 2, \cdots\}$ be a bi-index sequence of complex numbers. If $\sup\{|a_{i,j}|:i=1,2,\cdots\}=M_j<\infty, \forall j$, then there exists a sequence $(n_k)_{k\in \mathbb{N}}$ of natural numbers such that $\lim_{k\rightarrow \infty}n_k=\infty$, and $\lim_{k\rightarrow \infty}a_{n_k,j}$ exists,$\forall j\in \mathbb{N}$.
\end{Lemma}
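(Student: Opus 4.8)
The plan is to carry out a Cantor diagonal extraction, applying the Bolzano--Weierstrass theorem one column at a time. For each fixed $j$ the hypothesis $\sup\{|a_{i,j}|:i=1,2,\cdots\}=M_j<\infty$ says that the $j$-th column $(a_{i,j})_{i\in\mathbb{N}}$ is a bounded sequence of complex numbers, hence, identifying $\mathbb{C}$ with $\mathbb{R}^2$, it admits a convergent subsequence by Bolzano--Weierstrass. The essential difficulty is that the convergent subsequences produced for different values of $j$ could a priori be unrelated; the whole content of the lemma is to produce one index sequence $(n_k)$ that works simultaneously for every $j$.

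First I would build a nested family of infinite index sets. Set $S_0=\mathbb{N}$, and suppose inductively that an infinite set $S_{j-1}\subseteq\mathbb{N}$ has already been chosen. Since $(a_{i,j})_{i\in S_{j-1}}$ is bounded by $M_j$, Bolzano--Weierstrass provides an infinite subset $S_j\subseteq S_{j-1}$ along which $(a_{i,j})_{i\in S_j}$ converges. This yields a decreasing chain $\mathbb{N}=S_0\supseteq S_1\supseteq S_2\supseteq\cdots$ of infinite sets with the property that, for each fixed $j$, the $j$-th column converges along $S_j$, and hence along every $S_\ell$ with $\ell\ge j$, since those are subsets of $S_j$.

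Next I would take the diagonal. I choose $n_k\in S_k$ with $n_k>n_{k-1}$, which is possible at each stage because every $S_k$ is infinite; this makes the sequence strictly increasing, so $n_k\to\infty$. Now fix $j$. For all $k\ge j$ we have $n_k\in S_k\subseteq S_j$, so the tail $(n_k)_{k\ge j}$ is a subsequence indexed inside $S_j$, along which the $j$-th column already converges; therefore $\lim_{k\to\infty}a_{n_k,j}$ exists. As $j$ was arbitrary, the desired sequence $(n_k)$ has been constructed.

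The only point requiring care is the bookkeeping in the diagonal step, namely arranging the chosen indices to be strictly increasing (which forces $n_k\to\infty$) while keeping each $n_k$ inside its own $S_k$, so that the diagonal eventually lies inside every $S_j$. This is routine, and the argument as a whole is simply an iterated application of Bolzano--Weierstrass followed by a diagonalization.
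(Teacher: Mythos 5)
Your proof is correct and complete: the nested Bolzano--Weierstrass extractions followed by the diagonal choice $n_k\in S_k$ with $n_k>n_{k-1}$ is exactly the standard argument, and your observation that the tail $(n_k)_{k\ge j}$ lies in $S_j$ settles convergence of every column. The paper states this lemma without proof (treating it as elementary), and your diagonalization is precisely the argument it implicitly relies on.
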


 We need to review some basic concepts on ultrafilters of $\mathbb{N}$ (See \cite{GH} and Appendix A of \cite{SS} for details).  A {\sl filter} $\mathcal{F}$ is a non-empty set of subsets of $\mathbb{N}$ satisfying
 \begin{enumerate}
 \item if $X\in \mathcal{F}$ and $X\subseteq Y\subseteq \mathbb{N}$, then $Y\in \mathcal{F}$;
 \item if $X, Y\in \mathcal{F}$, then $X\cap Y\in \F$;
 \item $\emptyset $ is not in $\F$.
 \end{enumerate}
  A filter $\F$ is an {\sl ultrafilter} if for $\forall X\subseteq \mathbb{N}$, we have $X\in \F$ or $\mathbb{N}\setminus X\in \F$. An ultrafilter is called a free ultrafilter if $\F$ does not contain any finite sets. A filter $\F$ is {\sl infinite} if $V$ is an infinite subset, $\forall V\in \F$. Let $\beta \mathbb{N}$ be the Stone-Cech compactification of the set $\mathbb{N}$. Free ultrafilters can be identified as points in $\beta\mathbb{N}\setminus \mathbb{N}$.
  For a free ultrafilter $\omega \in \beta\mathbb{N}\setminus \mathbb{N}$, a sequence $a=(a_n)_n\in l^\infty(\mathbb{N})$, and a number $l\in \mathbb{C}$, we say that $\lim_{n\rightarrow \omega}a_n=l$ if for $\forall \varepsilon >0$, we have $\{n:|a_n-l|<\varepsilon\}\in \omega$.

 \begin{Lemma} Let $x=(x_n)\in l^\infty (\mathbb{N})$ and $l\in \mathbb{C}$. Then there is a subsequence $\{n_k:k=1, 2, \cdots\}$ of natural numbers such that $\lim_{k\rightarrow \infty} x_{n_k}=l$ if and only if there is a free ultrafilter $\omega\in \beta\mathbb{N}\setminus \mathbb{N}$ such that $\lim_{n\rightarrow \omega}x_n=l$.
 \end{Lemma}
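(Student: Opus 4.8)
The plan is to prove the two implications separately, using in one direction the nested sublevel sets $\{n:|x_n-l|<1/k\}$ and in the other the ``tail filter'' of the convergent subsequence together with the ultrafilter lemma. For the direction that assumes a free ultrafilter $\omega$ with $\lim_{n\to\omega}x_n=l$, I would first set $A_k=\{n\in\mathbb{N}:|x_n-l|<1/k\}$ for each $k\ge 1$. By the definition of $\lim_{n\to\omega}$, each $A_k\in\omega$, and since $\omega$ is free it contains no finite set, so every $A_k$ is infinite; moreover $A_1\supseteq A_2\supseteq\cdots$ because the radii $1/k$ decrease. I would then build $n_1<n_2<\cdots$ recursively, choosing $n_k\in A_k$ with $n_k>n_{k-1}$, which is possible precisely because each $A_k$ is infinite. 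Then $|x_{n_k}-l|<1/k$ for every $k$, so $\lim_{k\to\infty}x_{n_k}=l$, as desired.

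For the converse, suppose $\lim_{k\to\infty}x_{n_k}=l$ along some subsequence $(n_k)$. I would consider the collection $\mathcal{B}=\{\,\{n_j:j\ge K\}:K\in\mathbb{N}\,\}$ of tails of the index set $\{n_k:k\in\mathbb{N}\}$. This is a filter base: it is closed under finite intersections (the intersection of finitely many tails is the tail with the largest index $K$), and every member is infinite, hence nonempty. I would then extend the filter it generates, by Zorn's lemma (the ultrafilter lemma), to an ultrafilter $\omega$ on $\mathbb{N}$. The crucial point is that $\omega$ is free: since $\bigcap_K\{n_j:j\ge K\}=\emptyset$, no principal ultrafilter can contain $\mathcal{B}$, so $\omega$ is non-principal; and a non-principal ultrafilter contains no finite set, for if it contained a finite $F$, splitting $F$ into singletons and applying the ultrafilter property repeatedly would force a singleton into $\omega$, making it principal. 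Finally, given $\varepsilon>0$, convergence of the subsequence yields $K$ with $\{n_j:j\ge K\}\subseteq\{n:|x_n-l|<\varepsilon\}$; since the tail lies in $\omega$ and $\omega$ is upward closed, $\{n:|x_n-l|<\varepsilon\}\in\omega$, that is, $\lim_{n\to\omega}x_n=l$.

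I expect the only genuine subtlety to lie in this second direction, namely verifying that the ultrafilter obtained from the ultrafilter lemma is \emph{free} rather than merely an ultrafilter; the argument above reduces this to the equivalence between freeness and non-principality for an ultrafilter on $\mathbb{N}$, which in turn rests on the fact that the tails have empty intersection. The hypothesis $x\in l^\infty(\mathbb{N})$ plays no essential role in the argument beyond guaranteeing that $l\in\mathbb{C}$ is a finite limit, so I would not need to invoke boundedness explicitly.
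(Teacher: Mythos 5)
Your proof is correct, and the forward direction (ultrafilter implies subsequence) is essentially identical to the paper's: both pick $n_k$ recursively from the nested sets $\{n:|x_n-l|<1/k\}$, each of which lies in $\omega$ and is therefore infinite. The converse direction follows the same overall strategy (build a filter with the finite intersection property and extend it to a free ultrafilter by Zorn's lemma), but differs in the choice of filter base and in how freeness is secured. The paper generates its filter from the sublevel sets $\{n:|x_n-l|<1/k\}$ themselves, observes that these are all infinite, and then takes a maximal element of the poset of \emph{infinite} filters containing that filter, deferring to a reference for the fact that such a maximal element is a free ultrafilter. You instead use the tails $\{n_j:j\ge K\}$ of the subsequence as the base, extend to an arbitrary ultrafilter, and then prove freeness directly: the tails have empty intersection, so $\omega$ cannot be principal, and a non-principal ultrafilter on $\mathbb{N}$ contains no finite set. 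Your route makes the one genuinely subtle point of the lemma (why the resulting ultrafilter is free) self-contained and explicit, at the small cost of one extra step at the end, where you must pass from ``the tail is in $\omega$'' to ``$\{n:|x_n-l|<\varepsilon\}$ is in $\omega$'' by upward closure; in the paper's version that last membership is immediate because the sublevel sets are the generators. Both arguments are sound.
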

 \begin{proof} If $\lim_{n\rightarrow \omega}x_n=l$, by the definition, for any $\varepsilon>0$, $\{n:|x_n-l|<\varepsilon\}\in \omega$. Let $F_k=\{n:|x_n-l|<\frac{1}{k}\}\in \omega$. There exists $n_1\in F_1$. Since $F_2=\{n: |x_n-l|<\frac{1}{2}\}\in \omega$ is an infinite set, there is an $n_2\in \{n>n_1: n\in F_2\}$. Inductively, we get sequence $n_1<n_2<\cdots <n_k<$ such that $n_k\in F_k$, that is, $|x_{n_k}-l|<\frac{1}{k}$, for $k=1, 2, \cdots$. It follows that $\lim_{k\rightarrow \infty}x_{n_k}=l$.

 Conversely, if $\lim_{k\rightarrow\infty}x_{n_k}=l$, then $\mathcal{F}_0=\{\{n:|x_n-l|<\frac{1}{k}\}: k=1, 2, \cdots\}$ is a set of subsets of $\mathbb{N}$ having the finite intersection property (i.e., the intersection of any finitely many subsets in $\mathcal{F}_0$ is not empty). Moreover, any such an intersection is an infinite set in $\F_0$. Let $$\F=\{X\subseteq \mathbb{N}:\exists k_0\in \mathbb{N}, \{n:|x_n-l|<\frac{1}{k_0}\}\subseteq X\}.$$ It is obvious that $\F$ is an infinite filter. The set $\mathfrak{F}$ of all infinite filters $\S$ of $\mathbb{N}$ containing $\F$ is a poset (partial ordered set) with respect to the inclusion of sets. By Zorn's lemma, there is a maximal filter $\omega$ in $\mathfrak{F}$, which is a free ultrafilter (see \cite{AK} for the details).  For $\forall \varepsilon>0$, there exists a $k_0\in \mathbb{N}$ such that $\frac{1}{k_0}<\varepsilon$. Therefore,  $\{n:|x_n-l|<\varepsilon\}\supseteq \{n:|x_n-l|<\frac{1}{k_0}\}$. It follows that $\{n:|x_n-l|<\varepsilon\}\in \F\subseteq \omega$, that is, $\lim_{n\rightarrow\omega}x_n=l$.
 \end{proof}
 \begin{Theorem} Let $\{\alpha_i:i=1,2,\cdots\}$ be a sequence of real numbers, $\{\lambda_i> 0\}_{i\in\mathbb{N}}$ with $\lambda=\sup\{\lambda_i:i\ge 1\}<\infty$,  and for each $N\in \mathbb{N}, N\ge\lambda$, there be $N$ freely independent and identically distributed sequences $$\{p_{1,N}^{(i)}\}_{i\in \mathbb{N}}, \{p_{2,N}^{(i)}\}_{i\in \mathbb{N}}, \cdots, \{p_{N,N}^{(i)}\}_{i\in \mathbb{N}}$$ of  projections on a $C^*$-probability space $(\A_N,\varphi_N)$. Moreover, $\varphi_N(p^{(i)}_{r,N})=\frac{\lambda_i}{N}, i=1,2,\cdots, r=1,2,\cdots, N$.  Define a triangular family of sequences of random variables $\{a_{j, N}^{(i)}=\alpha_ip^{(i)}_{j,N}:i=1, 2, \cdots\}$, for $ j=1, 2, \cdots, N, N\in \mathbb{N}, N\ge \lambda.$ Then there exists a family of random variables $(b_i)_{i\in\mathbb{N}}$ in a $C^*$-probability space $(\A,\varphi)$ and a sequence $\{n_k:k=1,2,\cdots\}$ of natural numbers such that $\lim_{k\rightarrow \infty}n_k=\infty$ and  $(a_{1, n_k}^{(i)}+a_{2,n_k}^{(i)}+\cdots +a_{n_k, n_k}^{(i)})_{i\in \mathbb{N}}$ converges to $(b_i)_{i\in \mathbb{N}}$ in distribution, as $k\rightarrow \infty$. Moreover, for $i_1, i_2, \cdots, i_n, n\in \mathbb{N}$,
 $$\kappa_n(b_{i_1}, b_{i_2}, \cdots, b_{i_n})=\alpha_{i_1}\alpha_{i_2}\cdots \alpha_{i_n}\lim_{k\rightarrow \infty}n_k\varphi(p_{1, n_k}^{(i_1)}p_{1,n_k}^{(i_2)}\cdots p_{1, n_k}^{(i_n)}).$$
 \end{Theorem}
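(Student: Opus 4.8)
The plan is to verify condition~(2) of Theorem~2.2 along a suitable subsequence $\{n_k\}$, and then to rehouse the resulting limit family inside a $C^*$-probability space by forming a $C^*$-ultraproduct of the spaces $(\A_{n_k},\varphi_{n_k})$. First I would note that, since the sequences $\{p^{(i)}_{r,N}\}_{i}$ ($r=1,\dots,N$) are identically distributed, the scaled mixed moments
$$N\,\varphi_N\!\left(a^{(i_1)}_{r,N}\cdots a^{(i_n)}_{r,N}\right)=\alpha_{i_1}\cdots\alpha_{i_n}\,N\,\varphi_N\!\left(p^{(i_1)}_{r,N}\cdots p^{(i_n)}_{r,N}\right)$$
are independent of $r$, and I would check that for each fixed tuple they are bounded uniformly in $N$. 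Writing $a$ for the product of the first $n-1$ projections (the unit if $n=1$) and $b=p^{(i_n)}_{r,N}\ge 0$, the elementary state inequality $|\varphi_N(ab)|\le\|a\|\,\varphi_N(b)$ (a consequence of Cauchy--Schwarz), together with $\|p^{(i_j)}_{r,N}\|\le 1$, yields $|\varphi_N(p^{(i_1)}_{r,N}\cdots p^{(i_n)}_{r,N})|\le\lambda_{i_n}/N$, so the scaled moment is at most $|\alpha_{i_1}\cdots\alpha_{i_n}|\,\lambda$ in absolute value. Enumerating the countably many tuples and feeding this bounded array into Lemma~2.3, I obtain a single sequence $n_k\to\infty$ along which all the limits $\lim_k n_k\varphi_{n_k}(p^{(i_1)}_{1,n_k}\cdots p^{(i_n)}_{1,n_k})$ exist. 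Theorem~2.2 then applies and gives both the convergence in distribution of $(S^{(i)}_{n_k})_i$, where $S^{(i)}_N:=a^{(i)}_{1,N}+\cdots+a^{(i)}_{N,N}$, and the asserted cumulant formula.

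The remaining task is to arrange that the limit family lives in a $C^*$-probability space. Here I would fix a free ultrafilter $\omega$ on the index set $\{k\}$; since the relevant scalar sequences already converge, their $\omega$-limits coincide with their ordinary limits (the easy direction of Lemma~2.4). I then form the $C^*$-ultraproduct
$$\A:=\ell^\infty\!\big((\A_{n_k})_k\big)\big/\{(x_k):\lim_{k\to\omega}\|x_k\|=0\},\qquad \varphi\big([(x_k)_k]\big):=\lim_{k\to\omega}\varphi_{n_k}(x_k),$$
which is a $C^*$-probability space. Setting $b_i:=[(S^{(i)}_{n_k})_k]$, the joint moments compute as $\varphi(b_{i_1}\cdots b_{i_m})=\lim_{k\to\omega}\varphi_{n_k}(S^{(i_1)}_{n_k}\cdots S^{(i_m)}_{n_k})=\lim_{k\to\infty}\varphi_{n_k}(S^{(i_1)}_{n_k}\cdots S^{(i_m)}_{n_k})$, the last equality because the ordinary limit exists; these agree with the moments of the abstract limit family from Theorem~2.2, so $(b_i)_i$ realizes the required distribution in $(\A,\varphi)$ and inherits the cumulant formula.

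The hard part is the single hypothesis on which the ultraproduct rests: for each fixed $i$ the representatives must satisfy $\sup_k\|S^{(i)}_{n_k}\|<\infty$, which is not automatic because $S^{(i)}_N$ is a sum of $N$ terms. I expect this to be the main obstacle, and I would dispose of it using the free Poisson structure. For fixed $i$ the projections $p^{(i)}_{1,N},\dots,p^{(i)}_{N,N}$ are free and equidistributed of trace $\lambda_i/N$, so the distribution of $q^{(i)}_N:=\sum_{j=1}^N p^{(i)}_{j,N}$ is the $N$-fold free convolution power of the Bernoulli law $(1-\tfrac{\lambda_i}{N})\delta_0+\tfrac{\lambda_i}{N}\delta_1$, converging to the compactly supported free Poisson law of rate $\lambda_i$. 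An $R$-transform computation, $R_{q^{(i)}_N}(z)=\lambda_i\big/(1-\tfrac{\lambda_i}{N}-z)+o(1)$, locates the upper edge of its support below $(1+\sqrt{\lambda_i})^2\big/(1-\lambda_i/N)$, which stays bounded along $n_k\to\infty$. Since only joint moments and freeness enter, I may first replace the given spaces by a reduced free product realization carrying a faithful state; for a self-adjoint element under a faithful state the operator norm equals the top of the support of its spectral distribution, so $\sup_k\|q^{(i)}_{n_k}\|<\infty$ and hence $\sup_k\|S^{(i)}_{n_k}\|=|\alpha_i|\sup_k\|q^{(i)}_{n_k}\|<\infty$, completing the construction.
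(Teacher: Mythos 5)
Your overall strategy is the one the paper uses -- bound the scaled mixed moments uniformly in $N$, enumerate the countably many tuples, extract a single subsequence via Lemma~2.3, invoke Theorem~2.2, and then house the limit family in a $C^*$-ultraproduct -- but the key estimate in your first step rests on an inequality that is false in the stated generality. You claim $|\varphi_N(ab)|\le\|a\|\,\varphi_N(b)$ for $b\ge 0$ ``as a consequence of Cauchy--Schwarz.'' Cauchy--Schwarz actually gives $|\varphi(ab)|=|\varphi\bigl((ab^{1/2})b^{1/2}\bigr)|\le\varphi(aba^*)^{1/2}\varphi(b)^{1/2}\le\|a\|\,\|b\|^{1/2}\varphi(b)^{1/2}$, and the stronger form you want holds only when $\varphi$ is tracial (a rank-one partial isometry $a$ and a projection $b$ with $\varphi(b)=\varepsilon$ in $(M_2,\langle\cdot e_1,e_1\rangle)$ give $|\varphi(ab)|=\sqrt{\varepsilon}>\varepsilon=\|a\|\varphi(b)$). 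With the correct one-sided estimate you only get $|\varphi_N(p^{(i_1)}_{r,N}\cdots p^{(i_n)}_{r,N})|\le(\lambda_{i_n}/N)^{1/2}$, so $N\varphi_N(\cdots)\le N^{1/2}\lambda_{i_n}^{1/2}$, which is \emph{not} bounded in $N$. The paper repairs exactly this by iterating: one application of Cauchy--Schwarz peels off $\varphi_N(p^{(i_1)}_{r,N})^{1/2}=(\lambda_{i_1}/N)^{1/2}$ from the left, and then repeated use of $\varphi(x^*cx)\le\varphi(x^*x)$ for $0\le c\le 1$ collapses the remaining symmetric product down to $\varphi_N(p^{(i_n)}_{r,N})^{1/2}=(\lambda_{i_n}/N)^{1/2}$, yielding the bound $|\alpha_{i_1}\cdots\alpha_{i_n}|\,\lambda_{i_1}^{1/2}\lambda_{i_n}^{1/2}$, uniform in $N$. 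Your conclusion (uniform boundedness) is correct, but your justification would fail for non-tracial states, which the theorem allows; you need the two-sided peeling.

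On the second half: your ultraproduct construction (indexed by $k$ with a free ultrafilter on the subsequence, rather than the paper's ultrafilter on all of $\mathbb{N}$ produced by Lemma~2.4) is an equivalent packaging. You deserve credit for flagging that $b_i=[(S^{(i)}_{n_k})_k]$ only makes sense if $\sup_k\|S^{(i)}_{n_k}\|<\infty$, a point the paper passes over in silence when it declares $(s_N^{(i)})_N\in\prod_N\A_N$. Be aware, though, that your patch is itself incomplete: the norm of $S^{(i)}_N$ is not determined by its $\varphi_N$-distribution, and ``replace the given spaces by a reduced free product realization carrying a faithful state'' needs an argument -- a joint distribution arising from a non-faithful state need not obviously admit a faithful realization, and passing to the GNS image or cutting by a support projection does not automatically preserve the joint moments of the projections. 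A cleaner route is the standard operator-norm estimate for sums of free, centered, identically distributed elements in a reduced free product realization, which bounds $\|\sum_j(p^{(i)}_{j,N}-\tfrac{\lambda_i}{N})\|$ by a constant depending only on $\lambda_i$.
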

\begin{proof}
For $N, i(1), i(2), \cdots, i(n), n\in \mathbb{N}$, let $$f(N, i(1), i(2), \cdots, i(n))=N\varphi_N(a^{(i(1))}_{r,N}a^{(i(2))}_{r,N}\cdots a^{(i(n))}_{r,N}), 1\le r\le N, $$
and $M(i(1), i(2),\cdots, i(n))= |\alpha_{i(1)}\alpha_{i(2)}\cdots\alpha_{i(n)}| \lambda_{i(1)}^{1/2}\lambda_{i(n)}^{1/2}.$ Then
\begin{align*}
&|f(N,i(1), i(2), \cdots, i(n))|=N|\alpha_{i(1)}\alpha_{i(2)}\cdots\alpha_{i(n)}\varphi_N(p^{(i(1))}_{r,N}p^{(i(2))}_{r,N}\cdots p^{(i(n))}_{r,N})|\\
\le & N |\alpha_{i(1)}\alpha_{i(2)}\cdots\alpha_{i(n)}|\varphi_N(p^{(i(1))}_{r,N})^{1/2}\varphi_N(p^{(i(n))}_{r,N}p^{(i(n-1))}_{r,N}\cdots p^{(i(3))}_{r,N}p^{(i(2))}_{r,N}p^{(i(3))}_{r,N}\cdots p^{(i(n))}_{r,N})^{1/2}\\
\le & N|\alpha_{i(1)}\alpha_{i(2)}\cdots\alpha_{i(n)}|\frac{\lambda_{i(1)}^{1/2}}{N^{1/2}}\varphi_N(p^{(i(n))}_{r,N}p^{(i(n-1))}_{r,N}\cdots p^{(i(4))}_{r,N}p^{(i(3))}_{r,N}p^{(i(4))}_{r,N}\cdots p^{(i(n))}_{r,N})^{1/2} \\
\le& \cdots\\
\le & N|\alpha_{i(1)}\alpha_{i(2)}\cdots\alpha_{i(n)}|\frac{\lambda_{i(1)}^{1/2}}{N^{1/2}}\varphi_N(p^{(i(n))}_{r,N})^{1/2}\\
=&N|\alpha_{i(1)}\alpha_{i(2)}\cdots\alpha_{i(n)}|\frac{\lambda_{i(1)}^{1/2}}{N^{1/2}}\frac{\lambda_{i(n)}^{1/2}}{N^{1/2}}\\
=&M(i(1), i(2),\cdots, i(n)),
 \end{align*}
where the first inequality holds because of the Cauchy-Schwartz inequality $|\varphi(ab)|\le \varphi(a^*a)^{\frac{1}{2}}\varphi(b^*b)^{\frac{1}{2}}$, the second inequality holds by the inequality $\varphi(a^*ca)\le \varphi(a^*da)$, if $\varphi$ is a positive linear functional,  $c\le d$ are self-adjoint, and $a, b, c, d$ are elements in a unital $C^*$-algebra.

 Let $$S_m=\{(i(1), i(2), \cdots, i(n)): i(1)+i(2)+\cdots +i(n)=m, i(1), i(2), \cdots, i(n)\in \mathbb{N}\},$$ for $m\in \mathbb{N}$. Then for each $ m\in \mathbb{N}$, $S_m$ is a finite set with $|S_m|=k_m$, and $\{S_m:m\in \mathbb{N}\}$ is a partition of the set $\{(i(1), i(2), \cdots, i(n)): i(1), i(2), \cdots, i(n), n\in \mathbb{N}\}$. Define a bijective map
 $$\gamma: S_1\rightarrow \{1\}, \gamma: S_m\rightarrow \{(\sum_{l=1}^
 {m-1}k_{l})+1, (\sum_{l=1}^{m-1}k_{l})+2, \cdots, \sum_{l=1}^
 {m}k_{l}\}, m\ge 2. $$ For instance,   $\gamma((1,1))=2, \gamma(2)=3, \gamma(S_2)=\{2,3\}$.  It implies that
\begin{align*}
&\gamma(\{(i(1), i(2), \cdots, i(n)): i(1), i(2), \cdots, i(n), n\in \mathbb{N}\})\\
=&\gamma(S_1)\cup \gamma(S_2)\cup\cdots\cup \gamma(S_m)\cup\cdots\\
=&\{j:j=1,2, 3, \cdots\}.
\end{align*}
Thus, $\{f(N,i(1), i(2), \cdots, i(n)):N, i(1), i(2), \cdots, i(n),
n\in \mathbb{N}\}=\{f(N,\gamma^{-1}(j)):N, j\in\mathbb{N}\}$
 is a bi-index sequence. By Lemma 2.3, there is a sequence $(n_k)_{k\in\mathbb{N}}$ of natural numbers such that $\lim_{k\rightarrow \infty} n_k=\infty$, and $f(n_k, i(1), i(2), \cdots, i(n))$ converges  as $k\rightarrow\infty$, for every tuple $(i(1), i(2), \cdots, i(n))$. By Theorem 2.2, there is a family of random variables $(b_i)_{i\in \mathbb{N}}$ in a non-commutative probability space $(\A,\varphi)$ such that
$((a^{(i)}_{1,n_k}+a^{(i)}_{2,n_k}+\cdots+a^{(i)}_{n_k,n_k})_{i\in \mathbb{N}}$ converges to $(b_i)_{i\in \mathbb{N}}$ in distribution. Moreover, for $i_1, i_2, \cdots, i_n, n\in \mathbb{N}$,
 $$\kappa_m(b_{i_1}, b_{i_2}, \cdots, b_{i_n})=\alpha_{i_1}\alpha_{i_2}\cdots \alpha_{i_n}\lim_{k\rightarrow \infty}n_k\varphi(p_{1, n_k}^{(i_1)}p_{1,n_k}^{(i_2)}\cdots b_{1, n_k}^{(i_n)}).$$

Now we shall construct a $C^*$-probability space $(\A, \varphi)$ and $b_i\in \A, i=1, 2, \cdots$, such that $(b_i)_i$ has the limit distribution. Let $s_N^{(i)}=\sum_{j=1}^Na_{j,N}^{(i)}\in \A_N$, for $N> \lambda, i=1, 2, \cdots$.
We have proved that there is a subsequence $\{n_k:k=1, 2, \cdots\}$ such that $\lim_{k\rightarrow \infty}\varphi_{n_k}(s_{n_k}^{i(1)}s_{n_k}^{i(n)}\cdots s_{n_k}^{i(n)})$ converges for $i(1), i(2), \cdots, i(n)\in \mathbb{N}$. By Lemma 2.4, there is a $\omega\in \beta\mathbb{N}\setminus \mathbb{N}$ such that
$$\lim_{N\rightarrow\omega}\varphi_{N}(s_{N}^{i(1)}s_{N}^{i(n)}\cdots s_{N}^{i(n)})=\lim_{k\rightarrow\infty}\varphi_{n_k}(s_{n_k}^{i(1)}s_{n_k}^{i(n)}\cdots s_{n_k}^{i(n)}), \forall i(1), i(2), \cdots, i(n)\in \mathbb{N}.\eqno (1.1)$$ Let $\M=\prod_{N\in \mathbb{N}}\A_N$ be the $l^\infty$-product of $\A_N$'s. Let $I_\omega=\{x=(x_N)\in \M: \lim_{N\rightarrow\omega}\|x_N\|=0\}$. Then $\A=\prod^\omega\A_N=\M/I_\omega$ is a unital $C^*$-algebra, which is called the ultra-product $C^*$-algebra of $\A_N$'s (see Section 2 of \cite{GH} for details). Since $(\varphi_N(x_N))_N\in l^\infty(\mathbb{N})$, $\lim_{N\rightarrow \omega}\varphi_N(x_N)$ exists. We define $\varphi_\omega:\A\rightarrow \mathbb{C}$ by  $$\varphi_\omega(x)=\lim_{N\rightarrow \omega}\varphi_N(x_N), \forall x=(x_N)_N\in \A.$$ We show that $\varphi_\omega$ is a state on $\A$. It is obvious that $\varphi_\omega(1)=\lim_{N\rightarrow\omega}\varphi_N(1_N)=1$ and $$\varphi_\omega(x^*x)=\lim_{N\rightarrow \omega}\varphi_N(x^*_Nx_N)\ge 0, \forall x=(x_N)_N\in \A.$$ Therefore, $(\A, \varphi_\omega)$ is a $C^*$-probability space. Let $b_i=(s_N^{(i)})_N\in \A$, for $i\in \mathbb{N}$. Then $$\varphi_\omega (b_{i(1)}b_{i(2)}\cdots b_{i(n)})=\lim_{N\rightarrow\omega}\varphi_N(s_N^{i(1)}s_N^{i(2)}\cdots s_N^{i(n)}), \forall i(1), i(2), \cdots, i(n)\in \mathbb{N}. \eqno (1.2)$$ By (1.1) and (1.2), the sequence $\{b_i:i\ge 1\}$ has the limit distribution.
\end{proof}

We call the limit distribution in Theorem 2.5 a  multidimensional free Poisson distributions. Precisely, we have the following definition.
\begin{Definition} Let $\{\alpha_i:i=1,2,\cdots\}$ be a sequence of real numbers, $\{\lambda_i>0:i=1, 2, \cdots\}$ with $\lambda=\sup\{\lambda_i:i\ge 1\}<\infty$.  A sequence $\{b_i:i=1, 2, \cdots\}$ of random variables in a non-commutative probability space $(\B, \phi)$ has a multidimensional free Poisson distribution, if there is an $\omega \in \beta\mathbb{N}\setminus \mathbb{N}$, and, for each $N\in \mathbb{N}, N\ge\lambda$, there is a family $\{p_N^{(i)}:  i\in \mathbb{N}\}$ of projections in a $C^*$-probability space $(\A_N, \varphi_N)$ such that $\varphi_N(p_N^{(i)})=\frac{\lambda_i}{N}$,  and $$\kappa_n(b_{i(1)}, b_{i(2)}, \cdots, b_{i(n)})=\alpha_{i(1)}\alpha_{i(2)}\cdots \alpha_{i(n)}\lim_{N\rightarrow \omega}N\varphi_N(p_N^{(i(1))}p_N^{(i(2))}\cdots p_N^{(i(n))}),$$
 for all  $(i(1), i(2), \cdots, i(n))\in \mathbb{N}^n.$
\end{Definition}
\begin{Remark}
\begin{enumerate}
\item By Theorems 2.2 and 2.5, For each $i\in \mathbb{N}$,
$$\kappa_n(b_i)=\lim_{k\rightarrow\infty}n_k\varphi_{n_k}((a^{(i)}_{r,n_k})^n)=\alpha_i^n\lambda_i, n=1, 2, \cdots.$$
Hence, $b_i$ has a free Poisson distribution, for each $i\in \mathbb{N}$.
\item If $\sum_{i=1}^\infty \lambda_i<\infty$, $\{p^{(i)}_{r,N}\}_{i\in \mathbb{N}}$ is an orthogonal sequence of  projections in a $W^*$-probability space, then, for $ N>\sum_{i=1}^\infty \lambda_i, r=1,2,\cdots,N$,
$$\kappa_n(b_{i(1)}b_{i(2)}\cdots b_{i(n)})=\lim_{k\rightarrow \infty}n_k\varphi_{n_k}(\alpha_{i(1)}\cdots\alpha_{i(n)}p^{(i(1))}_{r,n_k}\cdots p^{(i(n))}_{ r,n_k})=0,$$ whenever there are $i(j)\ne i(l), 0\le j,l\le n$. This means that $\{b_i:i\in \mathbb{N}\}$ is a free family of free Poisson random variables. A similar procedure of constructing a free family from an orthogonal one can be found in Example 12.19 in \cite{NS}.
\item Combining above (1) and (2), we get a conclusion. If $\{b_i:i=1, 2, \cdots\}$ is a free sequence of free Poisson random variables with $\kappa_n(a_i)=\alpha_i^n\lambda_i$, for $n, i=1, 2, \cdots$, and $\sum_{i=1}^\infty\lambda_i=\lambda<\infty$, then $\{b_i:i=1, 2, \cdots\}$ has a multidimensional free Poisson distribution.
\end{enumerate}
\end{Remark}

Now we give a multidimensional limit theorem of compound free Poisson distributions.
\begin{Theorem}
Let $\{a_i:i=1, 2, \cdots\}$ be a sequence of self-adjoint operators in a $C^*$-probability space $(\A, \varphi)$ and $\{\lambda_i>0:i=1, 2, \cdots\}$ with $\sup\{\lambda_i:i=1, 2, \cdots\}=\lambda<\infty$. For each $N\in \mathbb{N}$ and $N\ge \lambda$, Let $(\C_N, \psi_N)$ be a $C^*$-probability space having  projections $p_N^{(i)}\in \C_N$ with $\psi_N(p_N^{(i)})=\frac{\lambda_i}{N}$, for $i\in \mathbb{N}$, and $\A_N=\A\otimes\C_N$ be the spacial tensor product of $\A$ and $\C_N$ with a state $\varphi_N= \varphi\otimes \psi_N$. Let $a_N^{(i)}=a_i\otimes p_N^{(i)}$, and $\{\{a_{N, j}^{(i)}=a_{i}\otimes p_{N,j}^{(i)}:i=1, 2, \cdots, \}: j=1, 2, \cdots, N\}$ be a freely independent family of $N$ identically distributed sequences of self-adjoint operators in $\A_N$
such that  the distribution of $\{a_{N, j}^{(i)}:i=1, 2, \cdots\}$ is same as that of $\{a_{N}^{(i)}:i=1, 2, \cdots\}$, for  $j=1, 2, \cdots, N$.
Let, moreover, $s_N^{(i)}=\sum_{j=1}^Na_{N,j}^{(i)}$. Then there is a sequence $\{b_i:i=1, 2, \cdots\}$ of compound free Poisson random variables in a $C^*$-probability space $(\B,\phi)$ and an $\omega\in \beta\mathbb{N}\setminus \mathbb{N}$ such that $\{s_N^{(i)}\}$ converges in distribution to $\{b_i; i=1, 2,\cdots\}$, as $N\rightarrow \omega$, and $$\kappa_n(b_{i(1)}, b_{i(2)}, \cdots, b_{i(n)})=\varphi(a_{i(1)}a_{i(2)}\cdots a_{i(n)})\lim_{N\rightarrow \omega}N\phi_N(p_N^{(i(1))}p_N^{(i(2))}\cdots p_N^{(i(n))}), $$for all  $(i(1), i(2), \cdots, i(n))\in \mathbb{N}^n$.
\end{Theorem}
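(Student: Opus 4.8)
The plan is to follow the proof of Theorem 2.5 essentially verbatim, the only structural change being that the scalar coefficients $\alpha_i$ are replaced by the self-adjoint operators $a_i$ through the tensor-product structure of $\A_N=\A\otimes\C_N$. The first step is to record how the moments feeding Theorem 2.2 factor. For a fixed row $r$, the identical-distribution hypothesis lets me replace the $r$-th copy by the reference variables $a_N^{(i)}=a_i\otimes p_N^{(i)}$, and since $\varphi_N=\varphi\otimes\psi_N$ splits over the two legs of the spatial tensor product I obtain
$$\varphi_N\big(a_{N,r}^{(i(1))}\cdots a_{N,r}^{(i(n))}\big)=\varphi\big(a_{i(1)}\cdots a_{i(n)}\big)\,\psi_N\big(p_N^{(i(1))}\cdots p_N^{(i(n))}\big).$$
Multiplying by $N$, the quantity whose limit I must control is therefore $\varphi(a_{i(1)}\cdots a_{i(n)})\cdot N\psi_N(p_N^{(i(1))}\cdots p_N^{(i(n))})$, which is the compound-Poisson analogue of the expression appearing in Theorem 2.5.

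Next I would bound this expression in $N$ for each fixed tuple. The operator factor $\varphi(a_{i(1)}\cdots a_{i(n)})$ is a single constant, dominated by $\|a_{i(1)}\|\cdots\|a_{i(n)}\|$, while the projection factor is handled by the identical Cauchy--Schwarz chain used in Theorem 2.5, which together with $\psi_N(p_N^{(i)})=\lambda_i/N$ yields $N\psi_N(p_N^{(i(1))}\cdots p_N^{(i(n))})\le\lambda_{i(1)}^{1/2}\lambda_{i(n)}^{1/2}\le\lambda$. Reindexing all tuples $(i(1),\dots,i(n))$ by the bijection $\gamma$ of Theorem 2.5 turns these numbers into a bi-index sequence whose rows are bounded, so Lemma 2.3 produces a subsequence $(n_k)$ with $n_k\to\infty$ along which every limit exists. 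Condition (2) of Theorem 2.2 is then met, and it yields a family $(b_i)$ with
$$\kappa_n(b_{i(1)},\dots,b_{i(n)})=\varphi(a_{i(1)}\cdots a_{i(n)})\lim_{k\to\infty}n_k\psi_{n_k}\big(p_{n_k}^{(i(1))}\cdots p_{n_k}^{(i(n))}\big).$$
To put the $b_i$ into a $C^*$-probability space I would reuse the ultraproduct argument of Theorem 2.5: Lemma 2.4 furnishes $\omega\in\beta\mathbb{N}\setminus\mathbb{N}$ converting the subsequential limits into $\lim_{N\to\omega}$, and setting $\B=\prod^\omega\A_N$, $\phi=\varphi_\omega$ and $b_i=(s_N^{(i)})_N$ gives convergence of $\{s_N^{(i)}\}$ as $N\to\omega$ together with the stated cumulant formula.

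Finally I would confirm that each $b_i$ is a compound free Poisson variable by specializing the cumulant formula to a single index: since $(p_N^{(i)})^n=p_N^{(i)}$ and $N\psi_N(p_N^{(i)})=\lambda_i$, one gets $\kappa_n(b_i)=\lambda_i\,\varphi(a_i^n)$ for all $n$, which is exactly Speicher's prescription for a compound free Poisson law with rate $\lambda_i$ and jump variable $a_i$. Beyond the bookkeeping, I expect the one genuinely delicate point to be the $C^*$-realization rather than the algebra: forming $b_i=(s_N^{(i)})_N$ inside the $l^\infty$-product requires the uniform norm bound $\sup_N\|s_N^{(i)}\|<\infty$, which reflects the compactness of the support of the limiting compound free Poisson distribution and is the analytic input inherited from the free-Poisson sums of Theorem 2.5; the moment estimate above is the only place where positivity of the state on the spatial tensor product is actually used.
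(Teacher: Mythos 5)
Your proposal is correct and coincides with the paper's own treatment: the paper proves this theorem simply by remarking (Remark 2.9) that the proof of Theorem 2.5 carries over verbatim once the bound is replaced by $M(i(1), \cdots, i(n))=|\varphi(a_{i(1)}\cdots a_{i(n)})|\lambda_{i(1)}^{1/2}\lambda_{i(n)}^{1/2}$, which is exactly the estimate your tensor-factorization $\varphi_N=\varphi\otimes\psi_N$ produces before applying the same Cauchy--Schwarz chain, reindexing by $\gamma$, Lemma 2.3, Theorem 2.2, Lemma 2.4, and the ultraproduct construction. Your closing observation that one needs $\sup_N\|s_N^{(i)}\|<\infty$ to place $(s_N^{(i)})_N$ in the $l^\infty$-product is a point the paper leaves equally implicit in the proof of Theorem 2.5, so it is not a deviation from the paper's argument.
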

\begin{Remark}
 The Proof of the above theorem is the same as that of Theorem 2.5. The only change is to let $M(i(1), i(2),\cdots, i(n))= |\varphi(a_{i(1)}a_{i(2)}\cdots a_{i(n)})|\lambda_{i(1)}^{1/2}\lambda_{i(n)}^{1/2}$ in the proof of Theorem 2.5, where $(i(1), i(2), \cdots, i(n))\in \mathbb{N}^n$, $n\in \mathbb{N}$.
\end{Remark}

\begin{Definition} Let $\{a_i:i=1, 2, \cdots\}$ be a sequence of self-adjoint operators in a $C^*$-probability space $(\A, \varphi)$ and $\{\lambda_i>0:i=1, 2, \cdots\}$ with $\lambda=\sup\{\lambda_i:i\ge 1\}<\infty$.  A sequence $\{b_i:i=1, 2, \cdots\}$ of random variables in a non-commutative probability space $(\B, \phi)$ has a multidimensional compound free Poisson distribution, if there is an $\omega \in \beta\mathbb{N}\setminus \mathbb{N}$, and for each $N\in \mathbb{N}, N\ge\lambda$, there is  a family $\{p_N^{(i)}:  i\in \mathbb{N}\}$ of projections in a $C^*$-probability space $(\A_N, \varphi_N)$ such that $\varphi_N(p_N^{(i)})=\frac{\lambda_i}{N}$,  and $$\kappa_n(b_{i(1)}, b_{i(2)}, \cdots, b_{i(n)})=\varphi(a_{i(1)}a_{i(2)}\cdots a_{i(n)})\lim_{N\rightarrow \omega}N\varphi_N(p_N^{(i(1))}p_N^{(i(2))}\cdots p_N^{(i(n))}),$$
 for all  $(i(1), i(2), \cdots, i(n))\in \mathbb{N}^n.$
\end{Definition}
\begin{Example}
\begin{enumerate}
\item For a positive number $\lambda$, and a sequence $\{a_1, a_2, \cdots \}$ of self-adjoint operators in a $C^*$-probability space $(\A, \varphi)$. Let $p_N^{(i)}=p_N$,  for $i=1, 2, \cdots, $, with $\psi(p_N)=\frac{\lambda}{N}$ in Theorem 2.8. Then the limit random variable family $\{b_1, b_2, \cdots, b_m\}$ has the $n$-th  free cumulant $$\kappa_n(b_{i(1)}, b_{i(2)}, \cdots, b_{i(n)})=\lambda\varphi(a_{i(1)}a_{i(2)}\cdots a_{i(n)}),$$ for $i(1), i(2), \cdots, i(n)\in \{1, 2, \cdots\}$, and $n\in \mathbb{N}$. It follows that the sequence $\{b_i:i=1, 2, \cdots\}$ with free cumulants $\kappa_n(b_{i(1)}, \cdots, b_{i(n)})=\lambda \varphi(a_{i(1)}\cdots a_{i(n)})$, for $i(1), i(2), \cdots, i(n)\in \{1, 2, \cdots\}$, and $n\in \mathbb{N}$, has a multidimensional compound free Poisson distribution. When $\{a_i:i=1, 2, \cdots \}$ is a finite sequence (i.e., $a_n=0$, for all $n>m$, for some $m\in \mathbb{N}$), this is the limit theorem 4.4.3 in \cite{RS} when $B=\mathbb{C}$,  and the limit distribution is the (multidimensional) compound $B$-Poisson distribution defined in 4.4.1 in \cite{RS} when $B=\mathbb{C}$.
\item  When $\lambda=1$ in the above example, we get $$\kappa_n(b_{i(1)}, b_{i(2)}, \cdots, b_{i(n)})=\varphi(a_{i(1)}a_{i(2)}\cdots a_{i(n)}),$$ for $i(1), i(2), \cdots, i(n)\in \{1, 2, \cdots, m\}$, and $n\in \mathbb{N}$, which is the $n$-th free cumulant of $\{sa_{i(1)}s, sa_{i(2)}s, \cdots, sa_{i(n)}s\}$, where $s$ is the semicircle random variable with $\varphi(s^2)=1$, and is free from $\{a_n:n\in \mathbb{N}\}$ (see Example 12.19 in \cite{NS}). Therefore, $\{sa_ns:n=1, 2, \cdots, m\}$ constructed in Example 12.19 in \cite{NS} has a multidimensional compound free Poisson distribution.
\item A natural question is to consider the distribution of $\{s_ia_is_i:i=1, 2, \cdots\}$, where $\{s_i:i=1, 2, \cdots\}$ is a semicircle family, and $\{s_i:i=1, 2, \cdots\}$ is free from $\{a_i:i=1, 2, \cdots\}$. Let $b_i=s_ia_is_i$, for $i\in \mathbb{N}$. For $i(1), i(2), \cdots, i(n)\in \mathbb{N}$, by the proof of Proposition 12.18 in \cite{NS}, for $n>1$, we have
\begin{align*}
&\kappa_n(b_{i(1)}, b_{i(2)}, \cdots, b_{i(n)})\\
=&\sum_{\pi\in NC(3n), \pi\vee\sigma=1_{3n}}\kappa_\pi(s_{i(1)}, a_{i(1)}, s_{i(1)}, s_{i(2)}, a_{i(2)}, s_{i(2)},\cdots,s_{i(n)}, a_{i(n)}, s_{i(n)})\\
=&\sum_{\pi\in NC(n)}(\prod_{j=1}^{n-1}\varphi(s_{i(j)}s_{i(j+1)}))\varphi(s_{i(n)}s_{i(1)})\kappa_\pi(a_{i(1)}, a_{i(2)}, \cdots, a_{i(n)})\\
=&(\prod_{j=1}^{n-1}\varphi(s_{i(j)}s_{i(j+1)}))\varphi(s_{i(n)}s_{i(1)})\varphi(a_{i(1)}a_{i(2)} \cdots a_{i(n)}),
\end{align*}
where $\sigma=\{\{1, 2, 3\}, \{4, 5, 6\}, \cdots, \{3n-2, 3n-1, 3n\}\}$.   If $\{s_i:i=1, 2, \cdots\}$ is a free family of semicircle random variables,  then
$\kappa_n(b_{i(1)}b_{i(2)}\cdots b_{i(n)})=0$, if there are two $j_1$ and $j_2$ such that $i(j_1)\ne i(j_2)$. This shows that $\{b_i:i\in \mathbb{N}\}$ is a free family of compound free Poisson random variables.
\item Now consider a finite family $\{s_ia_is_i: i=1, 2, \cdots, m\}$ of random variables, where $\{s_i:i=1, 2, \cdots, m\}$ is a free family of standard semicircle random variables, and the family is free from $\{a_i: 1, 2, \cdots, m\}$. By above (3), $\{b_i=s_ia_is_i: i=1, 2, \cdots, m\}$ is a free family of free compound free Poisson random variables.  We want to show that $\{b_i, i=1, 2, \cdots, m\}$ has a multidimensional compound free Poisson distribution. In fact, for each natural number $N>m$, choose an orthogonal family $\{p_N^{(i)}:i=1, 2, \cdots, m\}$ of projections in a $C^*$-probability space $(\A_N, \varphi)$ such that $\varphi_N(p_N^{(i)})=1/N$. It is obvious that $$\kappa_n(b_{i(1)}, b_{i(2)}, \cdots, b_{i(n)})=\varphi(a_{i(1)}a_{i(2)}\cdots a_{i(n)})\lim_{N\rightarrow \infty}N\varphi_N(p_N^{(i(1))}p_N^{(i(2))}\cdots p_N^{(i(n))}),$$
 for all  $(i(1), i(2), \cdots, i(n))\in \mathbb{N}^n.$ Therefore, by Definition 2.10, $\{b_1, b_2, \cdots, b_m\}$ has a multidimensional compound free Poisson distribution.
\item Let $\{b_i:i=1, 2, \cdots\}$ be a free family of compound free Poisson random variables with $\kappa_n(b_i)=\varphi(a_i^n)\lambda_i$, where $\{a_i:i=1, 2, \cdots\}$ be a sequence of self-adjoint operators in a $C^*$-probability space $(\A, \varphi)$, $\lambda_i>0, i=1, 2, \cdots $, and $\sum_{i=1}^\infty\lambda_i=\lambda<\infty$. We show that $\{b_i:i=1, 2, \cdots\}$ has a multidimensional compound free Poisson distribution. For each $N>\lambda$, let $\{p_N^{(i)}: i=1, 2, \cdots\}$ be an orthogonal sequence of projections in a $W^*$-probability space $(\A, \varphi)$ such that $\varphi_N(p_N^{(i)})=\lambda_i/N$, for $i=1, 2, \cdots$. Then It is obvious that $$\kappa_n(b_{i(1)}, b_{i(2)}, \cdots, b_{i(n)})=\varphi(a_{i(1)}a_{i(2)}\cdots a_{i(n)})\lim_{N\rightarrow \infty}N\varphi_N(p_N^{(i(1))}p_N^{(i(2))}\cdots p_N^{(i(n))}),$$
 for all  $(i(1), i(2), \cdots, i(n))\in \mathbb{N}^n.$ Therefore, by Definition 2.10, $\{b_1, b_2, \cdots\}$ has a multidimensional compound free Poisson distribution.
\end{enumerate}
\end{Example}

\section{Multidimensional Free Infinitely Divisible Distributions}
In this section, we generalize  the results on  multidimensional free infinitely divisible distributions in \cite{RS} and \cite{BG} to a more general case.
\begin{Definition}
 A sequence $\{a_i:i\in \mathbb{N}\}$ of random variables in a  non-commutative probability space $(\A,\varphi)$ has a free infinitely divisible (joint) distribution if for every $k\in \mathbb{N}$, there are $k$ freely independent identically distributed sequences $\{a_{j,k}^{(i)}:i\in \mathbb{N}\}, j=1, 2, \cdots, k$, such that the distribution of $\{\sum_{j=1}^ka_{j,k}^{(i)}:i\in \mathbb{N}\}$ is the same as that of $\{a_i:i\in \mathbb{N}\}$. We call the distribution of $\{a_n:n=1,2,\cdots\}$ an infinite dimensional  free infinitely divisible distribution.
\end{Definition}

A  concept related to multidimensional free infinitely divisible distributions is multidimensional free Levy processes. A finite dimensional version  of the following definition  $\{a_t^{(i)}: t\ge 0, i=1, 2, \cdots, m\}$ is constructed in Section 4.7 in \cite{RS}.
 \begin{Definition} A family $\{\{a_t^{(i)}:i=1, 2, \cdots \}:t\ge 0\}$  of random variables in a non-commutative probability space $(\A,\varphi)$ is a multidimensional free Levy processes if it satisfies the following conditions.
 \begin{enumerate}
 \item For $t>s\ge 0, v>u\ge 0$, $\{a_t^{(i)}-a_s^{(i)}:i=1, 2, \cdots\}$ and $\{a_v^{(i)}-a_u^{(i)}:i=1, 2, \cdots\}$ are free if $(s,t)\cap (u,v)=\emptyset$.
 \item For $t>s\ge 0 $, $\{a_t^{(i)}-a_s^{(i)}:i=1, 2, \cdots\}$ and  $\{a_{t-s}^{(i)}:i=1, 2, \cdots\}$ have the same distribution.
 \item $a_0^{(i)}=0$, for $i=1, 2, \cdots$.
 \item $\{a_t^{(i)}:i=1, 2, \cdots\}$ converges to $0$ in distribution as $t\rightarrow 0$.
 \end{enumerate}
 \end{Definition}

 The following theorem is the main result of this section.

\begin{Theorem} Let $\{a_i:i\in \mathbb{N}\}$ be a sequence of self-adjoint operators in a $C^*$-probability space $(\A,\varphi)$.Then  the first four of the following statements are equivalent. If, furthermore, the linear functional $\varphi$ on $\A$ is tracial (i. e., $\varphi(ab)=\varphi(ba)$, for all $a, b\in \A$), then the following five statements are equivalent.
\begin{enumerate}
\item $\{a_n:n\in \mathbb{N}\}$ has the same distribution as that of $\{a_1^{(i)}:i=1,2, \cdots\}$ of a multidimensional free Levy process $\{\{a_t^{(i)}:i=1, 2, \cdots\}: t\ge 0\}$ of self-adjoint operators in a $C^*$-probability space $(\A, \varphi)$.
\item Let $\P=\mathbb{C}\langle X_i:i\in\mathbb{N}\rangle_0$ be the non-unital $*$-algebra generated by non-commutative indeterminates $X_1, X_2, \cdots$ such that $X_i^*=X_i$ for $i\in \mathbb{N}$.

For $\underline{i}=(i(1), i(2), \cdots, i(m))\in \mathbb{N}^m, \underline{j}=(j(1), j(2), \cdots, j(n))\in \mathbb{N}^n$, let $$\underline{j}_r=(j(n), j(n-1), \cdots, j(1)), X_{\underline{i}}=X_{i(1)}X_{i(2)}\cdots X_{i(m)}, a_{\underline{i}}=(a_{i(1)}, \cdots, a_{i(m)}).$$  Define $\langle X_{\underline{i}}, X_{\underline{j}}\rangle=\kappa_{m+n}(a_{\underline{i}}, a_{\underline{j}_r})$. Then $\langle \cdot, \cdot\rangle$ is a non-negative  sesquilinear form on  $\P$.
\item For every $N\in \mathbb{N}$, there are $N$ freely independent identically distributed sequences $\{a_{j,N}^{(i)}:i\in \mathbb{N}\}, j=1, 2, \cdots, N$, in a $C^*$-probability space $(\A_N, \varphi_N)$,  and a subsequence $\{N_k:k\in \mathbb{N}\}$ of distinct natural numbers such that the distribution of $\{\sum_{j=1}^{N_k}a_{j,N_k}^{(i)}:i\in \mathbb{N}\}$ converges to  the distribution of $\{a_i:i\in \mathbb{N}\}$, as $k\rightarrow \infty$.
\item $\{a_n:n=1, 2, \cdots\}$ has a free infinitely divisible distribution.
\item Define a linear functional $\kappa: \P\rightarrow \mathbb{C}$, $\kappa(P)=\sum\alpha_{i_1, \cdots, i_n}\kappa_n(a_{i_1}, \cdots, a_{i_n})$, for a polynomial $P=\sum\alpha_{i_1, \cdots, i_n}X_{i_1}\cdots X_{i_n}\in \P$. Then $\kappa$ is  positive and tracial, and $\kappa(P^*)=\overline{\kappa(P)}$, for all $P\in \P$.
\end{enumerate}

\end{Theorem}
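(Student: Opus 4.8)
The plan is to prove the cycle $(1)\Rightarrow(4)\Rightarrow(3)\Rightarrow(2)\Rightarrow(1)$ among the first four statements, and then to establish $(2)\Leftrightarrow(5)$ under the extra traciality hypothesis. Throughout I would use the fact recalled in Section 1 that free cumulants linearize free convolution (so the cumulants of a sum of free identically distributed families equal the number of summands times the common cumulants), together with Theorem 2.2, which turns limits of rescaled moments into cumulants of the limit family. The two easy arcs come first. For $(1)\Rightarrow(4)$, given a Lévy process with $\{a_1^{(i)}\}$ distributed like $\{a_i\}$, fix $k$ and set $a_{j,k}^{(i)}=a_{j/k}^{(i)}-a_{(j-1)/k}^{(i)}$; conditions (1)--(2) of Definition 3.2 make these $k$ families free and identically distributed, and they telescope to $a_1^{(i)}$, so $\{a_i\}$ is free infinitely divisible. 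The arc $(4)\Rightarrow(3)$ is immediate: take $N_k=k$, since then $\sum_{j=1}^{k}a_{j,k}^{(i)}$ has exactly the distribution of $\{a_i\}$.

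For $(3)\Rightarrow(2)$ I would write $c^{(i)}=a_{1,N}^{(i)}$ for the first row. Since the rows are identically distributed, $\varphi_N$ restricted to one row is a state on a $C^*$-algebra, so $\varphi_N(QQ^*)\ge 0$ for every polynomial $Q$ in the $c^{(i)}$. For $P=\sum_{\underline i}c_{\underline i}X_{\underline i}\in\P$, using $X_i^*=X_i$,
\[
N\varphi_N\bigl(P(c)P(c)^*\bigr)=\sum_{\underline i,\underline j}c_{\underline i}\overline{c_{\underline j}}\,N\varphi_N\bigl(c^{(i(1))}\cdots c^{(i(m))}c^{(j(n))}\cdots c^{(j(1))}\bigr).
\]
Each monomial has degree $\ge 2$, so Theorem 2.2 lets me pass to the limit along $N_k$ term by term, and the right-hand side tends to $\sum_{\underline i,\underline j}c_{\underline i}\overline{c_{\underline j}}\,\kappa_{m+n}(a_{\underline i},a_{\underline j_r})=\langle P,P\rangle$. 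As each term on the left is $\ge 0$, the limit $\langle P,P\rangle\ge 0$, which is the non-negativity in (2). This step uses no traciality.

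The crux is $(2)\Rightarrow(1)$: I must build a Lévy process out of the abstract positive form, via a GNS construction followed by a free Fock space (free white noise) realization in the spirit of Glöckner--Schürmann--Speicher. Let $\mathcal N=\{P\in\P:\langle P,P\rangle=0\}$; then $\P/\mathcal N$ carries the inner product of (2) and completes to a Hilbert space $H$ with GNS map $\eta$ and left-regular representation $\rho(X_i)\eta(Q)=\eta(X_iQ)$. Over $L^2(\mathbb R_+)\otimes H$ I would form the full Fock space $\F$ and, for each $t\ge 0$, set $a_t^{(i)}$ equal to a creation operator on $\mathbf 1_{[0,t]}\otimes\eta(X_i)$, plus its adjoint, plus a gauge operator built from $\rho(X_i)$, plus the scalar $\varphi(a_i)\,t$. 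Freeness of increments over disjoint intervals is built into the tensor factorization of $\F$ along $L^2(\mathbb R_+)$; stationarity and $a_0^{(i)}=0$ are automatic; and self-adjointness of $a_t^{(i)}$ follows from $X_i^*=X_i$. The main obstacle is the direct Fock-space cumulant computation verifying $\kappa_n(a_1^{(i(1))},\dots,a_1^{(i(n))})=\kappa_n(a_{i(1)},\dots,a_{i(n)})$ for all $n$ and all indices, so that the time-one family is distributed like $\{a_i\}$; here the positivity in (2) is exactly what makes $H$ and $\rho$ exist, and the bookkeeping between the non-crossing cumulant expansion of the Fock operators and the form $\langle\cdot,\cdot\rangle$ must be matched precisely.

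Finally, for $(2)\Leftrightarrow(5)$ under traciality I would identify the two positivity statements. The expansion above gives $\langle P,P\rangle=\kappa(PP^*)$ with $\kappa$ the functional of (5). When $\varphi$ is tracial the free cumulants are invariant under cyclic rotation and reversal of their arguments, so $\kappa(PP^*)=\kappa(P^*P)$ and hence $\langle P,P\rangle=\kappa(P^*P)$; thus non-negativity of the form is precisely positivity of $\kappa$. The remaining claims of (5)---that $\kappa$ is tracial and $\kappa(P^*)=\overline{\kappa(P)}$---follow from cyclic invariance of cumulants under the trace and from the reality of cumulants of self-adjoint arguments, both consequences of traciality. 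I expect the Fock-space cumulant identity inside $(2)\Rightarrow(1)$ to be the only genuinely hard point; everything else is assembling standard free-probability facts.
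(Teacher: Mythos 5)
Your proposal follows essentially the same route as the paper: the same cycle $(1)\Rightarrow(4)\Rightarrow(3)\Rightarrow(2)\Rightarrow(1)$, the same use of Theorem 2.2 to turn the positivity of the states $\varphi_{N_k}$ into non-negativity of the form in $(3)\Rightarrow(2)$, the same GNS-plus-full-Fock-space (free white noise) realization for $(2)\Rightarrow(1)$, and the same identification $\langle P,P\rangle=\kappa(PP^*)$ for $(2)\Leftrightarrow(5)$. The one place you stop short of a proof is exactly where the paper spends most of its effort: inside $(2)\Rightarrow(1)$ you assert that stationarity is ``automatic'' and flag the time-one cumulant identity $\kappa_n(a_1^{(i(1))},\dots,a_1^{(i(n))})=\kappa_n(a_{i(1)},\dots,a_{i(n)})$ as bookkeeping to be matched, whereas the paper actually carries this out --- an explicit vacuum-expectation computation showing every mixed moment of the increment family $\{a^{(i)}_{s,t}\}$ is a polynomial in $t-s$ (which simultaneously yields stationarity and condition (4) of Definition 3.2, the convergence to $0$ as $t\to 0$, a condition your sketch does not address), followed by an appeal to the semigroup $\{\mu_t\}$ and Propositions 4.5.4 and 4.7.1 of Speicher's memoir to pin down the time-one cumulants. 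That verification is the heart of the implication and cannot be waved through; with it supplied (or with the same citations to [RS] the paper uses), your outline matches the published argument.
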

\begin{proof} The implications from $(4)$ to $(3)$, and from $(1)$ to $(4)$ are obvious.

$(3)\Rightarrow (2)$. By Theorem 2.2, $$\kappa_{m+n}(a_{\underline{i}}, a_{\underline{j}_r})=\lim_{k\rightarrow\infty}N_k\varphi_{N_k}(a_{i(1)}\cdots a_{i(m)}a_{j(n)}a_{i(n-1)}\cdots a_{j(1)}). $$
For a polynomial $P=\sum\alpha_{\underline{i}}X_{\underline{i}}\in\P$, we have
\begin{align*}
\langle P,P\rangle=&\sum\alpha_{\underline{i}}\overline{\alpha_{\underline{j}}}\langle X_{\underline{i}}, X_{\underline{j}}\rangle
=\sum\alpha_{\underline{i}}\overline{\alpha_{\underline{j}}}\kappa_{m+n}(a_{\underline{i}}, a_{\underline{j}_r})\\
=&\lim_{k\rightarrow\infty}\sum\alpha_{\underline{i}}\overline{\alpha_{\underline{j}}}N_k\varphi_{N_k}(a_{i(1)}\cdots a_{i(m)}a_{j(n)}\cdots a_{j(1)})\\
=&\lim_{k\rightarrow\infty}N_k\varphi_{N_k}(P(a)P(a)^*)\ge 0,
\end{align*}
where $P(a)=\sum\alpha_{\underline{i}}a_{i(1)}\cdots a_{i(n)}$. Moreover, for $X_{\underline{i}}=X_{i(1)}X_{i(2)}\cdots X_{i(m)},X_{\underline{j}}=X_{j(1)}X_{j(2)}\cdots X_{j(n)}\in \P$, we have
\begin{align*}\langle X_{\underline{i}}, X_{\underline{j}}\rangle=&\kappa_{m+n}(a_{\underline{i}}a_{\underline{j}_r})
=\lim_{k\rightarrow \infty}N_k\varphi_{N_k}(a_{i(1)}\cdots a_{i(m)}a_{j(n)}\cdots a_{j(1)})\\
=&\overline{\lim_{k\rightarrow \infty}N_k\varphi_{N_k}(a_{j(1)}\cdots a_{j(m)}a_{i(m)}\cdots a_{i(1)})}\\
=&\overline{\kappa_{m+n}(a_{\underline{j}}, a_{\underline{i}_r})}=\overline{\langle X_{\underline{j}}, X_{\underline{i}}\rangle}.
\end{align*}
It follows that $\langle\cdot, \cdot\rangle$ is a non-negative sesquilinear form on $\P$.

$(2)\Rightarrow (1)$. We can get a Hilbert $\H$ space from $(\P, \langle\cdot,\cdot\rangle)$ after dividing out the kernel $\K=\{P\in \P: \langle P, P\rangle=0\}$, and completion. From now on, we will identify elements in $\P$ with their images in $\H$. We adapt the construction in Section 4.7 in \cite{RS} (see also \cite{GSS}). Let $\widehat{\H}=L^2(\mathbb{R}_+)\otimes \H$, and $\F(\widehat{\H})=\mathbb{C}\Omega\oplus \widehat{\H}\otimes\widehat{\H}\oplus\cdots\oplus \widehat{\H}^{\otimes n}\oplus\cdots$, where $\Omega$ is called the vacuum vector of the full Fock space $\F(\widehat{\H})$. Define $\tau: B(\F(\widehat{\H}))\rightarrow \mathbb{C}$, $\tau(T)=\langle T\Omega, \Omega\rangle$, for $T\in B(\F(\widehat{\H}))$. Then $(B(\F(\widehat{\H})), \tau)$ is a $C^*$-probability space.

For $x \in \widehat{\H}$, and $T\in B(\widehat{\H})$, define $l^*(x), l(x)$ and $p(T)$ on $\F(\widehat{\H})$ as follows.
$$l^*(x)\Omega=x, l^*(x)\xi_1\otimes\xi_2\otimes \cdots\otimes\xi_n=x\otimes \xi_1\otimes\cdots\otimes \xi_n, l(x)\Omega=0,$$
 $$l(x)\xi_1\otimes\cdots\otimes \xi_n=\langle \xi_1,x\rangle\xi_2\otimes\cdots\otimes \xi_n, p(T)\Omega=0, p(T)\xi_1\otimes\cdots\otimes \xi_n=T\xi_1\otimes \xi_2\otimes\cdots\otimes \xi_n,$$ for $\xi_1, \xi_2, \cdots \xi_n\in \widehat{\H}$.

 Let $a_0^{(i)}=0$, and $$a_t^{(i)}=t\varphi(a_i)+l(\chi_{(0,t)}\otimes X_i)+l^*(\chi_{(0,t)}\otimes X_i)+p(\lambda(\chi_{(0,t)})\otimes\lambda(X_i))\in B(\F(\widehat{\H})),$$ for $t>0$, where $\lambda(\chi_{(0,t)})$ and $\lambda(X_i)$ be the left multiplication operators of $\chi_{(0,t)}$ and $X_i$ on $L^2(\mathbb{R}_+)$ and $\H$, respectively. Let $$a_{s,t}^{(i)}=a_t^{(i)}-a_s^{(i)}=(t-s)\varphi(a_i)+l(\chi_{(s,t)}\otimes X_i)+l^*(\chi_{(s,t)}\otimes X_i)+p(\lambda(\chi_{(s,t)})\otimes\lambda(X_i)),$$ for $0\le s<t$. For $0\le s<t$ and $0\le u<v$, we have $L^2((s,t))\otimes \H$ is orthogonal to $L^2((u,v))\otimes \H$, as subspaces of $\widehat{\H}$, if $(s,t)\cap (u,v)=\emptyset$. It follows from Theorem 4.6.15 in \cite{RS} that $\{a_{s,t}^{(i)}: i=1, 2, \cdots\}$ and $\{a_{u,v}^{(i)}: i=1, 2, \cdots\}$ are free.

 Let $b^{(i)}_1=(t-s)\varphi(a_i)$, and $b^{(i)}_2=l(\chi_{(s,t)}\otimes X_i)+l^*(\chi_{(s,t)}\otimes X_i)+p(\lambda(\chi_{(s,t)})\otimes\lambda(X_i))$. Then
 \begin{align*}
 \tau(a^{(i(1))}_{s,t}a^{(i(2))}_{s,t}\cdots a^{(i(n))}_{s,t})=&\tau(\sum_{j_1, j_2, \cdots, j_n=1}^2b^{(i(1))}_{j_1}\cdots b^{(i(n))}_{j_n})\\
 =&\sum_{V_1\cup V_2=\{1, 2, \cdots, n\}, V_1\cap V_2=\emptyset}\prod_{k\in V_1}b_1^{(i(k))}\tau(b_2^{(i(k_1))}b_2^{(i(k_2))}\cdots b_2^{(i(k_l))})\\
 =&\sum_{V_1\cup V_2=\{1, 2, \cdots, n\}, V_1\cap V_2=\emptyset}(t-s)^{|V_1|}\prod_{k\in V_1}\varphi(a_{i(k)})\tau(b_2^{(i(k_1))}b_2^{(i(k_2))}\cdots b_2^{(i(k_l))}),
 \end{align*}
 where $k_1<k_2<\cdots<k_l, V_2=\{k_1, k_2, \cdots, k_l\}$. Now we show that $\tau(b_2^{(i(1))}\cdots b_2^{(i(n))})$ is a multiple of a power of $t-s$.
Note that $b_2^{(i)}=l(\chi_{(s,t)})\otimes l(X_{i})+l^*(\chi_{(s,t)})\otimes l^*(X_{i})+p(\lambda(\chi_{(s,t)}))\otimes p(\lambda(X_i))$. Let's use the following notations.
$$b_{2,1}=l(\chi_{(s,t)}), b_{2,2}^{(i)}=l(X_i), b_{3,1}=l^*(\chi_{(s,t)}), b_{3,2}^{(i)}=l^*(X_i),b_{4,1}=p(\lambda(\chi_{(s,t)})), b_{4,2}^{(i)}=p(\lambda(X_i)).$$ Then
$$
\tau(b_2^{(i(1))}\cdots b_2^{(i(n))})=\sum_{j_1, j_2, \cdots, j_n=2}^4\tau(\prod_{l=1}^nb_{j_l, 1}\otimes b_{j_l, 2}^{(i(l))})=\sum_{j_1, j_2, \cdots, j_n=2}^4\tau_1(\prod_{l=1}^nb_{j_l, 1})\tau_2(\prod_{l=1}^n b_{j_l, 2}^{(i(l))}),$$ where $\tau_1$ and $\tau_2$ are vacuum states on the full Fock spaces $\F(L^2(\mathbb{R}_+))$ and $\F(\H)$, respectively.
Note that $\tau_1(\prod_{l=1}^nb_{j_l, 1})\ne 0$ implies that $j_n=3, j_1=2$. Therefore, $$\tau_1(\prod_{l=1}^nb_{j_l, 1})=\langle b_{j_2, 1}\cdots b_{j_{n-1}, 1}\chi_{(s,t)}, \chi_{(s,t)}\rangle.$$ Also, $b_{4,1}\chi_{(s,t)}=\chi_{(s,t)}^2=\chi_{(s,t)}.$ Therefore, we can assume that there is no $b_{4,1}$ in $\{b_{j_2, 1}, \cdots, b_{j_{n-1}, 1}\}$. Since $\tau_1(\prod_{l=1}^nb_{j_l, 1})=\langle b_{j_2, 1}\cdots b_{j_{n-1}, 1}\chi_{(s,t)}, \chi_{(s,t)}\rangle\ne 0$, the numbers of $b_{2,1}$'s must be equal to the number of $b_{3,1}$'s in $\{b_{j_2, 1}, \cdots, b_{j_n,1}\}$. Note also that $b_{2,1}b_{3,1}\xi=(t-s)\xi$, and $b_{3,1}b_{2,1}\chi_{(s,t)}\otimes \xi=(t-s)\chi_{(s,t)}\otimes \xi$, for all $\xi\in \F(L^2(\mathbb{R}_+))$. Therefore, $\tau_1(\prod_{l=1}^nb_{j_l, 1})$ is a multiple of a power of  $t-s$, if $\tau_1(\prod_{l=1}^nb_{j_l, 1})\ne 0$. It follows that $\tau(b_2^{(i(1))}\cdots b_2^{(i(n))})$ is a multiple of a power of $t-s$. Hence, the distribution of $\{a_{s,t}^{(i)}:i\in \mathbb{N}\}$ is the same as that of $\{a_{0,t-s}^{(i)}:i\in \mathbb{N}\}$. The above argument also shows that $\lim_{t-s\rightarrow 0}\tau(a^{(i(1))}_{s,t}a^{(i(2))}_{s,t}\cdots a^{(i(n))}_{s,t})=0$, that is, $\{a_t^{(i)}:i=1, 2, \cdots\}$ converges to $0$ in distribution, as $t\rightarrow 0$. Hence, $\{\{a_t^{(i)}:i=1, 2, \cdots\}: t\ge 0\}$ is a multidimensional free Levy process.

Now we show that $a_1:=\{a_1^{(i)}:i=1, 2, \cdots\}$ has the same distribution as that of $\{a_n:n=1, 2, \cdots\}$. Since $\{a_t:=\{a_t^{(i)}:i\in \mathbb{N}\}: t\ge0\}$ is a multidimensional free Levy process, $\{a_1^{(i)}: i\in \mathbb{N}\}$ has a multidimensional free infinitely divisible distribution $\mu_{a_1}$.  Let $\mu_t=\mu(a_t)$, the distributions of $a_t$, for $t\ge 0$. Then $\mu_0=\mu(a_0)=0$, $\mu_t$ converges to zero pointwisely, as $t\rightarrow 0$. Moreover, $$\mu_{t+s}=\mu(a_{t+s})=\mu(a_s+a_{s+t}-a_s)=\mu(a_s)\boxplus\mu(a_{s+t}-a_{s})=\mu_s\boxplus\mu_t,$$ for $t, s>0$. Therefore, $\{\mu_t:t\ge 0\}$ is the semigroup of distributions corresponding to $\mu_{a_1}$ (see 4.5.3 in \cite{RS}). By Proposition 4.5.4 in \cite{RS}, $\kappa_n(a_1^{(i(1))}, \cdots, a_1^{(i(n))})=\lim_{t\rightarrow 0}\frac{1}{t}\tau(a_t^{(i(1))}, \cdots, a_t^{(i(n))})$. By the last part of the proof of 4.7.1 in \cite{RS}, we get $$\kappa_n(a_1^{(i(1))}, \cdots, a_1^{(i(n))})=\kappa_n(a_{i(1)}, \cdots, a_{i(n)}).$$

$(2)\Rightarrow (5)$. It is obvious from $(2)$ that $\kappa(PP^*)=\langle P, P\rangle\ge 0$, for $P\in \P$.
For $$X_{\underline{i}}=X_{i(1)}X_{i(2)} \cdots X_{i(m)}, X_{\underline{j}}=X_{j(1)}X_{i(2)} \cdots X_{j(n)}\in \P,$$ we divide every partition $\pi$ in $NC_{i,j}$ of all non-crossing partitions of the set $\{i(1), \cdots, i(m), j(1), \cdots, j(n)\}$ into three parts, $\pi=\pi_i\cup \pi_j\cup \pi_{i,j}$, where $\pi_i$ consists of subsets of $\{i(1), \cdots, i(m)\}$, $\pi_j$ consists of subsets of $\{j(1), \cdots, j(n)\}$, and each block $V$ of $\pi_{i,j}$ contains both $i$'s and $j$'s.  We then have
 \begin{align*}
&\kappa(X_{\underline{i}}X_{\underline{j}})\\
=&\kappa_{m+n}(a_{\underline{i}}, a_{\underline{j}})=\sum_{\pi\in NC_{i,j}}\varphi_{\pi_i}(a_{i(1)}, \cdots, a_{i(m)})\varphi_{\pi_j}(a_{j(1)}, \cdots, a_{j(n)})\\
\times &\prod_{V=\{i_1, \cdots, i_k, j_1, \cdots, j_l\}\in \pi_{i,j}}\varphi(a_{i_1}\cdots a_{i_k}a_{j_1}\cdots a_{j_l})\mu(\pi, 1_{m+n})\\
=&\sum_{\pi\in NC_{i,j}}\varphi_{\pi_j}(a_{j(1)}, \cdots, a_{j(n)})\varphi_{\pi_i}(a_{i(1)}, \cdots, a_{i(m)})\\
\times &\prod_{V=\{j_1, \cdots, j_l, i_1, \cdots, i_k\}\in \pi_{j,i}}\varphi(a_{j_1}\cdots a_{j_l}a_{i_1}\cdots a_{i_k})\mu(\pi, 1_{m+n})\\
=&\sum_{\pi\in NC_{j,i}}\varphi_{\pi}(a_{j(1)}, \cdots, a_{j(n)}, a_{i(1)}, \cdots, a_{i(m)})\mu(\pi, 1_{m+n})\\
=&\kappa_{m+n}(a_{j(1)},\cdots, a_{j(n)},  a_{i(1)}, \cdots, a_{i(m)})=\kappa(X_{\underline{j}}X_{\underline{i}}),
\end{align*}
where $NC_{j,i}$ is the set of all non-crossing partitions of $\{j(1), \cdots, j(n), i(1), \cdots, i(m)\}$, $$\pi=\pi_j\cup\pi_i\cup \pi_{j,i}$$ is a partition of $\pi\in NC_{j,i}$, similar to the  partition $\pi=\pi_i\cup \pi_j\cup \pi_{i,j}$ of a  partition $\pi$ in $NC_{i,j}$.
  It follows that
  $\kappa(PQ)=\kappa(QP)$, for all $P,Q\in \P$.
Moreover,
\begin{align*}
\kappa(X_{\underline{i}})&=\kappa_m(a_{i(1)}, a_{i(2)},\cdots, a_{i(m)})=\langle X_{i(1)}\cdots X_{i(m-1)}, X_{i(m)}\rangle\\
=&\overline{\langle X_{i(m)}, X_{i(1)}\cdots X_{i(m-1)}\rangle}=\overline{\kappa_m(a_{i(m)}, a_{i(m-1)}, \cdots, a_{i(1)})}\\
=&\overline{\kappa(X_{\underline{i}_r})}.
\end{align*}
Therefore,
   $$\kappa(P^*)=\sum\overline{\alpha_{i(1), \cdots, i(n)}}\kappa(X_{\underline{i}_r})=\overline{\kappa(P)}.$$

 $(5 )\Rightarrow (2)$ is obvious. Define $\langle P,Q\rangle=\kappa(PQ^*)$ for $P,Q\in \P$. Then, by $(5)$, $\langle \cdot,\cdot\rangle$ is non-negative sesquilinear on $\P$.
\end{proof}
By our multidimensional compound free Poisson limit theorem (Theorem 2.8), and above Theorem 3.3, we get the following corollary.
\begin{Corollary} If a sequence $\{b_n:n\in \mathbb{N}\}$ of self-adjoint operators in a $C^*$-probability space $(\A,\varphi)$
 has a multidimensional compound free Poisson distribution, then its distribution is multidimensional free infinitely divisible.
 \end{Corollary}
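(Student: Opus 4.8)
The plan is to read the corollary as a near-immediate consequence of two earlier results: the compound free Poisson limit theorem (Theorem 2.8) furnishes the approximating triangular array, and the equivalence of statements $(3)$ and $(4)$ in Theorem 3.3 converts ``approximable by sums of freely independent identically distributed sequences'' into ``free infinitely divisible.'' Note that this equivalence is among the first four statements of Theorem 3.3, so it holds without any tracial assumption, which is what we need here.

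First I would unwind the hypothesis. Since $\{b_n\}$ has a multidimensional compound free Poisson distribution, Definition 2.10 supplies a free ultrafilter $\omega$ and, for each $N\ge\lambda$, projections $p_N^{(i)}$ with $\varphi_N(p_N^{(i)})=\lambda_i/N$ so that the free cumulants of $\{b_n\}$ are given by the prescribed formula. Theorem 2.8 realizes exactly these cumulants as the $\omega$-limit of the distributions of the sums $s_N^{(i)}=\sum_{j=1}^N a_{N,j}^{(i)}$, where for each $N$ the sequences $\{a_{N,j}^{(i)}:i\in\mathbb{N}\}$, $j=1,\dots,N$, are freely independent and identically distributed. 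Because the free cumulants determine the joint moments and hence the joint distribution, $\{b_n\}$ has the same distribution as the $\omega$-limit family produced by Theorem 2.8.

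Next I would replace the limit along $\omega$ by a limit along an honest subsequence $\{N_k\}$, which is precisely the form demanded by statement $(3)$ of Theorem 3.3. Enumerate the countably many joint moments $\varphi_N(s_N^{i(1)}\cdots s_N^{i(n)})$ as a single sequence $(x_N^{(t)})_N$, $t\in\mathbb{N}$; each is bounded and has $\omega$-limit $l_t$ by Theorem 2.8. For each $k$ the set $A_k=\bigcap_{t=1}^k\{N:|x_N^{(t)}-l_t|<1/k\}$ is a finite intersection of members of $\omega$, hence lies in $\omega$ and is therefore infinite; choosing $N_k\in A_k$ with $N_k>N_{k-1}$ gives a strictly increasing sequence along which every moment sequence converges to its $\omega$-limit. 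This is the diagonal-with-ultrafilter device already used in the forward direction of Lemma 2.4, now applied simultaneously to all joint moments. Consequently $\{\sum_{j=1}^{N_k}a_{N_k,j}^{(i)}:i\in\mathbb{N}\}$ converges in distribution to $\{b_n\}$ as $k\to\infty$, so statement $(3)$ of Theorem 3.3 holds for $\{b_n\}$, and the equivalence $(3)\Leftrightarrow(4)$ then yields that its distribution is multidimensional free infinitely divisible.

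The hard part, such as it is, will be the middle step: passing from the ultrafilter limit to a simultaneous subsequential limit of the whole countable family of joint moments, for which the finite-intersection/infiniteness property of the free ultrafilter is the essential tool. Everything else is a direct appeal to Theorem 2.8 and Theorem 3.3. An alternative, slightly longer route would bypass the subsequence extraction and instead verify statement $(2)$ of Theorem 3.3 directly, using the positivity estimate $\kappa(P(a)P(a)^*)=\lim_{N\to\omega}N\varphi_N(\cdots)\ge 0$ inherited from the states $\varphi_N$ exactly as in the proof of $(3)\Rightarrow(2)$; but the limit-theorem route above is the cleaner one.
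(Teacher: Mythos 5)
Your proposal is correct and follows exactly the route the paper intends: the paper offers no written proof beyond citing Theorem 2.8 and Theorem 3.3, and your argument (realize the given cumulants via Theorem 2.8, then invoke the equivalence $(3)\Leftrightarrow(4)$ of Theorem 3.3, which indeed needs no tracial hypothesis) is the natural expansion of that one-line citation. Your extra step converting the ultrafilter limit into a genuine subsequential limit, via the finite-intersection property of $\omega$, correctly fills the only detail the paper leaves implicit in matching Theorem 2.8's $\omega$-limit to the subsequence form required by statement $(3)$.
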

By (2) in Theorem 3.3, we have the following result.
 \begin{Corollary} Let $\{\{a_{m,n}:n=1, 2, \cdots\}: m=1, 2, \cdots\}$ be a sequence of sequences of random variables in a $C^*$-probability space $(\A, \varphi)$. If each sequence $\{a_{n,m}:n=1, 2, \cdots\}$ has a multidimensional free infinitely divisible distribution, for $m=1, 2, \cdots$, and $\{\{a_{m,n}: n=1, 2, \cdots\}: m=1, 2, \cdots\}$ converges in distribution to $\{b_n\}$ of random variables in a non-commutative probability space $(\B, \phi)$, as $m\rightarrow \infty$, then $\{b_n:n=1, 2, \cdots\}$ has a multidimensional free infinitely divisible distribution.
 \end{Corollary}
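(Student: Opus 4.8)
The plan is to reduce the statement to the cumulant criterion of Theorem 3.3(2) and then push the non-negativity of the associated sesquilinear form through the limit $m\to\infty$. Throughout, write $\P=\mathbb{C}\langle X_i:i\in\mathbb{N}\rangle_0$ for the non-unital $*$-algebra of Theorem 3.3, and for a multi-index $\underline{i}=(i(1),\dots,i(p))$ abbreviate $X_{\underline{i}}=X_{i(1)}\cdots X_{i(p)}$ and $\underline{i}_r=(i(p),\dots,i(1))$.

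First I would invoke the characterization of convergence in distribution by free cumulants recalled in Section 1: since $\{\{a_{m,n}:n\in\mathbb{N}\}:m\in\mathbb{N}\}$ converges in distribution to $\{b_n:n\in\mathbb{N}\}$ as $m\to\infty$, for every $k\in\mathbb{N}$ and all indices one has
$$\lim_{m\to\infty}\kappa_k(a_{m,i(1)},\dots,a_{m,i(k)})=\kappa_k(b_{i(1)},\dots,b_{i(k)}),$$
the cumulants on the left being computed in $(\A,\varphi)$ and those on the right in $(\B,\phi)$. Next, because each sequence $\{a_{m,n}:n\in\mathbb{N}\}$ is free infinitely divisible, the implication $(4)\Rightarrow(2)$ of Theorem 3.3 shows that the sesquilinear form $\langle X_{\underline{i}},X_{\underline{j}}\rangle_m:=\kappa_{p+q}(a_{m,\underline{i}},a_{m,\underline{j}_r})$ is non-negative on $\P$, for every $m$, where $p,q$ are the lengths of $\underline{i},\underline{j}$.

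The core step is then to pass to the limit. Fix a polynomial $P=\sum_{\underline{i}}\alpha_{\underline{i}}X_{\underline{i}}\in\P$, a \emph{finite} linear combination, and let $\langle\cdot,\cdot\rangle_b$ denote the form attached to $\{b_n\}$ by the recipe of Theorem 3.3(2). Using the previous step termwise,
$$\langle P,P\rangle_b=\sum_{\underline{i},\underline{j}}\alpha_{\underline{i}}\overline{\alpha_{\underline{j}}}\,\kappa_{p+q}(b_{\underline{i}},b_{\underline{j}_r})=\lim_{m\to\infty}\sum_{\underline{i},\underline{j}}\alpha_{\underline{i}}\overline{\alpha_{\underline{j}}}\,\kappa_{p+q}(a_{m,\underline{i}},a_{m,\underline{j}_r})=\lim_{m\to\infty}\langle P,P\rangle_m\ge 0,$$
the exchange of limit and sum being harmless since the sum is finite, and the last inequality holding because each $\langle P,P\rangle_m\ge 0$ and a limit of non-negative reals is non-negative. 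The Hermitian symmetry of $\langle\cdot,\cdot\rangle_b$ is inherited from that of the $\langle\cdot,\cdot\rangle_m$ in exactly the same fashion, so $\langle\cdot,\cdot\rangle_b$ is a non-negative sesquilinear form, i.e. $\{b_n\}$ satisfies statement (2) of Theorem 3.3. The implication $(2)\Rightarrow(4)$ then gives that $\{b_n:n\in\mathbb{N}\}$ is free infinitely divisible.

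The only point that needs care, and where I expect the main (though mild) obstacle to lie, is the applicability of Theorem 3.3 to the limit family: that theorem is stated for self-adjoint operators in a $C^*$-probability space, whereas $\{b_n\}$ lives a priori in an arbitrary non-commutative probability space $(\B,\phi)$. This is not a genuine difficulty, because the route $(2)\Rightarrow(1)\Rightarrow(4)$ in the proof of Theorem 3.3 only consumes the non-negative form: the Fock-space construction there builds a free L\'evy process whose time-one marginal has free cumulants equal to $\kappa_n(b_{i(1)},\dots,b_{i(n)})$, hence the same distribution as $\{b_n\}$, and this realization is automatically in a $C^*$-probability space. Thus the non-negativity established in the core step is exactly what is required, and free infinite divisibility of the \emph{distribution} of $\{b_n\}$ follows regardless of the ambient $(\B,\phi)$.
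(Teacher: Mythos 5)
Your argument is exactly the route the paper intends: the paper offers no proof beyond the remark ``By (2) in Theorem 3.3,'' and your proposal correctly fills this in by passing the non-negativity of the finitely supported sesquilinear forms $\langle\cdot,\cdot\rangle_m$ through the cumulant convergence and then invoking $(2)\Rightarrow(1)\Rightarrow(4)$. Your added observation that the limit family need only be realized up to distribution (via the Fock-space construction) to apply Theorem 3.3 is a correct and worthwhile clarification of a point the paper leaves implicit.
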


 In order to study multidimensional compound free Poisson distribution approximation to  multidimensional free infinitely divisible distributions, we need to make the following technical assumption.

 {\bf Assumption (A)} {\sl Let $\{b_1,b_2, \cdots \}$ be a sequence of random variables having a multidimensional free infinitely divisible distribution. By Theorem 4.5.5 in \cite{RS}, for each $n$, there is a sequence
 $\{p_{i, n}^{(j)}:i=1, 2, \cdots , n; j=1, 2, \cdots \}$ of random variables such that $\{p_{i, n}^{(j)}:i=1, 2, \cdots , n\}$ has a multidimensional compound free Poisson distribution, for all $j\ge 1$,  and the distribution of  $\{p_{i, n}^{(j)}:i=1, 2, \cdots , n\}$ converges to the distribution of $\{b_1, b_2, \cdots, b_n\}$, as $j\rightarrow \infty$. We say that the sequence $\{b_1, b_2, \cdots, \}$ satisfies Assumption (A) if random variables $p_{1,n}^{(j)}, p_{2, n}^{(j)}, \cdots, p_{n,n}^{(j)}$  can be chosen from a $C^*$-probability space $(\A_j, \varphi_j)$} with a faithful state $\varphi_j$, for $j=i,2,\cdots$.

 The following theorem generalizes Speicher's free compound Poisson distribution approximation theorem 4.5.5 in \cite{RS} to the case of infinitely dimensional distributions, under the above assumption.

\begin{Theorem} If a sequence $\{b_n:n=1, 2, \cdots\}$ of self-adjoint operators in a $C^*$-probability space $(\A,\varphi)$ has a multidimensional free infinitely divisible distribution and satisfies Assumption (A), then  there is  a bi-index sequence $\{p_{m,n}:n=1, 2, \cdots, m=1, 2, \cdots\}$ of self-adjoint operators in a $C^*$-probability space $(\B, \phi)$ such that $\{p_{m,n}: n=1, 2, \cdots\}$, for $m=1, 2, \cdots$, have multidimensional  compound free Poisson random distributions and $\{p_{m,n}:n=1, 2, \cdots\}$ converges in distribution to $\{b_n:n=1, 2, \cdots\}$, as $m\rightarrow\infty$.
\end{Theorem}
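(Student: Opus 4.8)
The plan is to combine a diagonal extraction from the finite-dimensional approximations supplied by Assumption (A) with an amalgamation of the resulting $C^*$-probability spaces into a single one, the latter realized as an inductive limit of finite reduced free products.

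First I would organize the diagonal selection. Enumerate the countable set $\bigcup_{k\ge 1}\mathbb{N}^k$ of all finite tuples as $t_1,t_2,\ldots$, and for $t=(i(1),\ldots,i(k))$ write $b_t=b_{i(1)}\cdots b_{i(k)}$. Fix $m$, let $n_m$ be the largest index occurring in $t_1,\ldots,t_m$, and choose a truncation level $\nu(m)\ge n_m$. By Assumption (A) the compound free Poisson families $\{p^{(j)}_{i,\nu(m)}:i=1,\ldots,\nu(m)\}$ converge in distribution to $\{b_1,\ldots,b_{\nu(m)}\}$ as $j\to\infty$; since $t_1,\ldots,t_m$ involve only indices $\le n_m\le\nu(m)$, I may pick $j(m)$ so large that the corresponding mixed moments differ from those of $\{b_1,\ldots,b_{\nu(m)}\}$ by less than $1/m$. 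Padding by zeros --- put $q^{(m)}_n:=p^{(j(m))}_{n,\nu(m)}$ for $n\le\nu(m)$ and $q^{(m)}_n:=0$ otherwise --- yields an infinite self-adjoint sequence that is still multidimensional compound free Poisson in the sense of Definition 2.10: extend the realizing data by $a_i=0$ (and, say, $\lambda_i=1$) for $i>\nu(m)$, so that every free cumulant meeting an index above $\nu(m)$ vanishes, matching $q^{(m)}_n=0$. For any fixed tuple $t=t_{l_0}=(i(1),\ldots,i(k))$ and all $m\ge l_0$ the range and closeness estimates apply, giving $|\varphi_{j(m)}(q^{(m)}_{i(1)}\cdots q^{(m)}_{i(k)})-\varphi(b_t)|<1/m$, so every joint moment converges as $m\to\infty$.

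Next I would build the common ambient space. Each row $\{q^{(m)}_n\}_n$ lives in $(\A_{j(m)},\varphi_{j(m)})$, whose state is \emph{faithful} by Assumption (A). I would take $(\B,\phi)$ to be the reduced free product $\ast_m(\A_{j(m)},\varphi_{j(m)})$, constructed concretely as the inductive limit $\varinjlim_M(\A_{j(1)}\ast\cdots\ast\A_{j(M)})$ of finite free products. Faithfulness guarantees that each factor embeds isometrically with $\phi|_{\A_{j(m)}}=\varphi_{j(m)}$. Setting $p_{m,n}:=q^{(m)}_n\in\A_{j(m)}\subseteq\B$, the $m$-th row retains the distribution it had in $\A_{j(m)}$, hence is multidimensional compound free Poisson, while $\phi(p_{m,i(1)}\cdots p_{m,i(k)})=\varphi_{j(m)}(q^{(m)}_{i(1)}\cdots q^{(m)}_{i(k)})\to\varphi(b_{i(1)}\cdots b_{i(k)})$, which is exactly convergence in distribution to $\{b_n\}_n$.

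The main obstacle is this amalgamation: infinitely many a priori unrelated $C^*$-probability spaces must be merged into one so that, on each factor, both the exact state values and the compound free Poisson structure of the corresponding row survive. This is where the faithfulness in Assumption (A) is indispensable, as it is precisely the hypothesis ensuring the reduced free product state restricts to each $\varphi_{j(m)}$ and the factor embeddings are isometric; without it the GNS data underlying the inductive limit could collapse the very moments we are tracking. A secondary, routine point to record is that zero-padding a finite compound free Poisson family produces an infinite one conforming to Definition 2.10; once the test tuples are enumerated, the diagonal extraction itself is elementary.
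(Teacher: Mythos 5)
Your argument is essentially correct, but it takes a genuinely different route from the paper's. The paper fixes the approximation index $j$ and builds, for each $j$, an \emph{honest infinite} compound free Poisson sequence by forming the inductive (directed) limit of the algebras $\B_{m,j}$ over the truncation level $m$: the key input is the consistency, extracted from the proof of Speicher's Theorem 4.5.5, that $\{p^{(j)}_{1,m},\dots,p^{(j)}_{m,m}\}$ and $\{p^{(j)}_{1,m+1},\dots,p^{(j)}_{m,m+1}\}$ have the same distribution, which (together with faithfulness of the states, used to get isometric connecting $*$-homomorphisms) lets all truncation levels be glued coherently; because of this consistency no diagonal extraction is needed --- the limit $j\to\infty$ handles every tuple simultaneously. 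You avoid inductive limits entirely by zero-padding each finite family to an infinite one (your observation that multilinearity of $\kappa_n$ makes the padded cumulants vanish, matching $a_i=0$ in Definition 2.10, is the small lemma that replaces the paper's consistency argument) and by diagonalizing over $(\nu(m),j(m))$; you then spend the faithfulness hypothesis of Assumption (A) on a reduced free product that places all rows in a single ambient $(\B,\phi)$. That last step is actually more careful than the paper, whose proof leaves each row $\{[p_i^{(j)}]\}_i$ in its own space $(\B_j,\varphi_j)$ and never assembles the single $(\B,\phi)$ promised in the statement. The trade-off: the paper's rows are canonical and unpadded, and its $j\to\infty$ convergence is immediate from Speicher's theorem; your version is more elementary (no directed systems) and literally delivers the single probability space, at the cost of the padding lemma and the diagonal bookkeeping. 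Both uses of faithfulness are legitimate, just deployed at different places.
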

\begin{proof}
 Let $(\A, \varphi)$ be a $C^*$-probability space and  $\{b_n: n=1, 2, \cdots\}$ be a sequence of self-adjoint operators in $\A$ having a multidimensional  free infinitely divisible  distribution. Then  for each $m\in \mathbb{N}$, $\{b_1, \cdots, b_m\}$ has a multidimensional free infinitely divisible distribution. By 4.5.5 in \cite{RS}, there is a sequence $\{p_{i, m}^{(j)}: i=1, 2, \cdots, m, j=1, 2, \cdots\}$ of random variables such that $\{p_{1,m}^{(j)}, \cdots, p_{m,m}^{(j)}\}$ has a multidimensional compound free Poisson distribution, for $j=1, 2, \cdots$, and its distribution converges to that of $\{b_1, \cdots, b_m\}$, as $j\rightarrow \infty$. By Assumption (A), we can choose $p_{1, m}^{(j)}, \cdots,  p_{m,m}^{(j)}$ to be self-adjoint operators in a $C^*$-probability space $(\B_{m,j}, \varphi_{m,j})$ with faithful state $\varphi_{m,j}$.  Moreover, by the proof of Theorem 4.5.5 in \cite{RS}, the distribution of $\{p_{1, m}^j, \cdots, p_{m,m}^j\}$ is the same as that of $\{p_{1, m+1}^j, \cdots, p_{m,m+1}^j\}$, for $m\ge 1$ and $j=1, 2, \cdots$. Without loss of generality, we can assume $B_{m,j}$ is generated by $\{p_{i,m}^{(j)}: i=1, 2, \cdots, m\}$ and unit $1$. By Theorem 4.11 in \cite{NS}, there exists an isometric *-homomorphism $f_{m,j}: \B_{m,j}\rightarrow \B_{m+1, j}$ such that $f_{m,j}(p_{i,m}^{(j)})=p_{i, m+1}^{(j)}$, for $i=1, 2, \cdots, m$ for $m\ge 1$, for $j=1, 2, \cdots$. For $m<n$, define $f_{m,n,j}=f_{n-1, j}\circ\cdots \circ f_{m+1, j}\circ f_{m,j}$. Then $\{(\B_{m,j}, f_{m,j}): m=1, 2, \cdots\}$ is a directed system. We define an equivalent relation $\equiv$ in the disjoint union $\bigsqcup_{m=1}^\infty\B_{m,j}$ as follows.  For $a\in \B_{m,j}, b\in \B_{n,j}$, we say $a\equiv b$ if there is a $k>m, k>n$ such that $f_{m,k, j}(a)=f_{n,k, j}(b)$. For $a\in \bigsqcup_m\B_{m,j}$, we define $[a]$ to be  the equivalent class of $a$ in $\bigsqcup_m\B_{m,j}/\equiv$. Define a semi-norm $\|\cdot\|$ on $\bigsqcup_m\B_{m,j}/\equiv$ as follows.
 $$\|[a]\|=\lim_{n\rightarrow \infty}\|f_{m,n,j}(a)\|, $$ for $a\in \B_{m,j}$. We can get a $C^*$-algebra $\B_j$ from $(\bigsqcup_m\B_{m,j}/\equiv, \|\cdot\|)$ after dividing out the kernel $I=\{[x]\in \bigsqcup_m\B_{m,j}/\equiv: \|[x]\|=0\}$ and completion, which is called the directed limit $C^*$-algebra of $(\B_{m,j}, f_{m,j})$
 (see Pages 38-41 in \cite{KT}). Define a linear functional $\varphi_j: \B_j\rightarrow \mathbb{C}$, $\varphi_j([a])=\varphi_{m,j}(a)$, for $a\in \B_{m,j}$. Then $(\B_j, \varphi_j)$ is a $C^*$-probability space. Let $[p_i^{(j)}]=[p_{i, m}^{(j)}]$, for $m\ge i, i, j=1, 2, \cdots$ For $1\le i(1), i(2), \cdots, i(n)\le m$, we have
$$\varphi_j([p_{i(1)}^{(j)}]\cdots [p_{i(n)}^{(j)}])=\varphi_{m,j}(p_{i(1), m}^{(j)}\cdots p_{i(n),m}^{(j)}).$$ It follows that $\{[p_i^{(j)}]: i=1, 2, \cdots\}$ has a multidimensional compound free Poisson distribution, for $j=1, 2, \cdots$. Moreover,
$$\varphi(b_{i(1)}\cdots b_{i(n)})=\lim_{j\rightarrow \infty}\varphi_{m,j}(p_{i(1), m}^{(j)}\cdots p_{i(n),m}^{(j)})=\lim_{j\rightarrow \infty}\varphi_j([p_{i(1)}^{(j)}]\cdots [p_{i(n)}^{(j)}]).$$ The conclusion follows now.
\end{proof}

\end{document}